\newtheorem{thm}{Theorem}[section]
\newtheorem{lem}[thm]{Lemma}
\begin{document}

\nocite{*}

\title{\bf An Explicit Result for \\ Primes Between Cubes}

\author{\textsc{Adrian W. Dudek} \\ 
Mathematical Sciences Institute \\
The Australian National University \\ 
\texttt{adrian.dudek@anu.edu.au}}
\date{}

\maketitle

\begin{abstract}
We prove that there is a prime between $n^3$ and $(n+1)^3$ for all $n \geq \exp(\exp(33.217))$. Our new tool which we derive is a version of Landau's explicit formula for the Riemann zeta-function with explicit bounds on the error term. We use this along with other recent explicit estimates regarding the zeroes of the Riemann zeta-function to obtain the result. Furthermore, we show that there is a prime between any two consecutive $m$th powers for $m \geq 4.971 \times 10^9$.
\end{abstract}

\section{Introduction}

Legendre's conjecture is the assertion that there is at least one prime between any two consecutive squares. Confirmation of this seems to be out of reach, for applying modern techniques on the assumption of even the Riemann hypothesis does not suffice in forming a proof (see Titchmarsh's \cite{titchmarsh1986theory} classic text for a discussion). It is thus the aim of this paper to study the weaker problem of primes between cubes, where some progress has already been made.

Consider first the more general problem of showing the existence of at least one prime in the interval $(x,x+x^{\theta})$ for some $\theta \in (0,1)$ and for all sufficiently large $x$.  These are \textit{short} intervals, so called for the relative (compared to the size of $x$) length of such an interval tends to zero as $x$ goes to infinity. 

In 1930, Hoheisel \cite{hoheisel} was able to solve the problem for $\theta = 1-1/33000$, that is, there will be a prime in the interval

$$(x,x+x^{32999/33000})$$
for all sufficiently large $x$. His idea relied on having asymptotic estimates on the distribution of zeroes of the Riemann zeta-function $\zeta(s)$, namely a zero-free region and a zero-density estimate. Landau's explicit formula for the Riemann zeta-function then allows a connection to be made between the zeroes and the primes. Using Hoheisel's ideas, Ingham \cite{ingham} was able to prove a more general theorem, specifically that if one has a bound of the form

$$\zeta(1/2+it) = O(t^c )$$
for some $c>0$, then one can take 

$$\theta = \frac{1+4c}{2+4c}+\epsilon.$$

This result arises through using the bound for $\zeta(1/2+it)$ to construct a zero-density estimate, and this in turn furnishes a value for $\theta$ through the explicit formula. Notably, Hardy and Littlewood were able to give a value of $c=1/6+\epsilon$, which corresponds to $\theta = 5/8+\epsilon$. From this, one sets $x = n^3$ and it follows that for all sufficiently large $n$ there exists a prime in the interval

$$(x,x+x^{5/8}) = (n^3,n^3+n^{15/8+ \epsilon}) \subset (n^3, (n+1)^3).$$
That is, there is a prime between any two consecutive cubes so long as these are sufficiently large. The reader should, however, note that consideration of the interval

$$(x,x+3x^{2/3})$$
is sufficient for primes between cubes and as such is the interval we use throughout this paper.

We wish to make the above result explicit, in that we might determine numerically a lower bound for which this result would hold onwards. By working through that of Ingham \cite{ingham}, we can do this thanks to Ford's \cite{ford} explicit zero-free region, Ramar\'{e}'s \cite{ramare} explicit zero-density theorem and an effective version of the explicit formula (see Theorem \ref{explicitformula}). We employ these estimates to prove our main theorem.

\begin{thm} \label{maintheorem}
There is a prime between $n^3$ and $(n+1)^3$ for all $n \geq \exp(\exp(33.217))$.
\end{thm}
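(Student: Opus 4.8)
The plan is to show that the interval $(x, x + 3x^{2/3})$ contains a prime for all $x = n^3$ with $n$ sufficiently large, which suffices since $(n+1)^3 - n^3 = 3n^2 + 3n + 1 > 3n^2 = 3x^{2/3}$. Following Ingham's classical argument, I would work with the Chebyshev function $\psi(x) = \sum_{p^k \le x} \log p$ and aim to prove that $\psi(x + y) - \psi(x) > 0$ for $y = 3x^{2/3}$, since a positive difference forces a prime power in the interval (and the contribution of genuine prime powers $p^k$ with $k \ge 2$ is negligible, of size $O(x^{1/2}\log x)$, which I would need to bound explicitly). The engine is the explicit formula of Theorem \ref{explicitformula}, which expresses $\psi(x)$ in terms of a sum over the nontrivial zeros $\rho = \beta + i\gamma$ of $\zeta$ with an explicit error term; subtracting the formulas at $x+y$ and $x$ gives
\begin{equation*}
\psi(x+y) - \psi(x) = y - \sum_{\rho} \frac{(x+y)^{\rho} - x^{\rho}}{\rho} + (\text{explicit error}).
\end{equation*}

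The core estimate is to bound the zero-sum $\left| \sum_{\rho} \frac{(x+y)^{\rho} - x^{\rho}}{\rho} \right|$ by something of smaller order than $y = 3x^{2/3}$. I would split the sum according to the height $|\gamma| \le T$ of the zeros (choosing $T$ as a power of $x$ to be optimized, truncating the explicit formula at height $T$ with a controlled truncation error). For each zero, $|(x+y)^\rho - x^\rho| \le y (x+y)^{\beta - 1} \cdot |\rho| \cdot (\text{something})$ — more precisely I would use $(x+y)^\rho - x^\rho = \rho \int_x^{x+y} u^{\rho - 1}\, du$, giving the bound $y \, x^{\beta-1}$ per zero after dividing by $\rho$; alternatively the trivial bound $2 x^\beta / |\gamma|$ is better for large $\gamma$. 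The real part $\beta$ is controlled by Ford's explicit zero-free region \cite{ford}, which gives $\beta \le 1 - c/(\log \gamma)^{2/3}(\log\log\gamma)^{1/3}$, so $x^{\beta} \le x \cdot \exp(-c (\log x)/(\log T)^{2/3}(\ldots))$ for zeros up to height $T$; and the number of zeros with $\beta > \sigma$ and $|\gamma| \le T$ is controlled by Ramaré's explicit zero-density theorem \cite{ramare}, $N(\sigma, T) \ll T^{A(1-\sigma)} (\log T)^B$. Combining these via partial summation over $\beta$ (Riemann–Stieltjes against the density function) bounds the zero-sum by roughly $x \exp(-c(\log x)^{1/3}(\log\log x)^{-1/3})$ plus a piece from the density estimate of the form $x^{1 - \delta} (\log x)^{O(1)}$, and one checks this is $o(x^{2/3})$ — indeed this is exactly where the double-exponential threshold $\exp(\exp(33.217))$ emerges, since the subexponential saving from the zero-free region only beats $x^{-1/3}$ once $\log\log x$ exceeds a specific constant.

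The main obstacle is the bookkeeping of constants: every inequality must be made fully explicit, so I would need explicit forms of the partial summation against $N(\sigma, T)$, an explicit handling of the truncation error in the explicit formula (this is what Theorem \ref{explicitformula} supplies), explicit bounds on the small zeros (those with $|\gamma|$ below the range where Ford's region is sharp, where instead one uses numerical verification of RH up to some height $H$, i.e. all such zeros have $\beta = 1/2$), and an explicit bound on the prime-power contribution $\sum_{k \ge 2, p^k \in (x, x+y)} \log p$. The delicate optimization is the choice of $T = x^{\kappa}$: too small and the truncation error dominates, too large and the zero-density contribution $x^{A\kappa(1-\sigma)+\ldots}$ grows — I expect $\kappa$ chosen so the two competing error terms balance, with $A$ from Ramaré's density exponent being the key numerical input. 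Assembling all these explicit pieces and verifying that the resulting inequality $\psi(x+y) - \psi(x) > 0$ holds precisely for $n \ge \exp(\exp(33.217))$ is the technical heart of the argument; the final step is then the trivial deduction that a prime (not just a prime power) lies in $(n^3, (n+1)^3)$.
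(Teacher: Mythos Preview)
Your outline matches the paper's approach---Ingham's method made explicit via the truncated explicit formula, Ford's zero-free region, and Ramar\'{e}'s density estimate---but the order-of-magnitude accounting contains a slip that, taken literally, would break the argument. You assert that the zero-sum is ``roughly $x\exp\bigl(-c(\log x)^{1/3}(\log\log x)^{-1/3}\bigr)$'' and that ``one checks this is $o(x^{2/3})$''; but such a quantity is \emph{never} $o(x^{2/3})$, since a sub-exponential factor cannot overcome the fixed power $x^{-1/3}$. The resolution is that once you commit to the mean-value bound you yourself mention, $|(x+h)^\rho - x^\rho|/|\rho|\le h\,x^{\beta-1}$, the factor $h=3x^{2/3}$ comes out entirely and the task is only to show $\sum_{|\gamma|<T} x^{\beta-1}<1$, not $<x^{-1/3}$. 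The ``trivial'' bound $2x^\beta/|\gamma|$ is not used at all; mixing the two is what produces the impossible target.

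The paper writes this sum as $2x^{-1}N(T)+2\int_0^1 N(\sigma,T)\,x^{\sigma-1}\log x\,d\sigma$, splits the integral at $\sigma=5/8$, and inserts Ramar\'{e}'s bound $N(\sigma,T)\le 9.7\,(3T)^{8(1-\sigma)/3}\log^{5-2\sigma}T+103\log^2 T$ together with Ford's region $\beta\le 1-\nu(T)$. Crucially, $T$ is chosen not as a pure power $x^\kappa$ but through an auxiliary parameter $k\in(2/3,1)$ via $x/\bigl((3T)^{8/3}\log^2 T\bigr)=\exp(\log^k x)$; this makes the integrand in the density piece a clean exponential and yields a bound of the shape $(\log x)^{4-k}\exp\bigl(-\tfrac{4}{3^{2/3}c}(\log x)^{k-2/3}/(\log\log x)^{1/3}\bigr)$. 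The double-exponential threshold $\exp(\exp(33.217))$ then comes from the point where this polylog-versus-subexponential competition first drops below $1$ (simultaneously with a second inequality controlling the truncation error in Theorem~\ref{explicitformula}), optimised numerically at $k\approx 0.9359$. Your plan to balance errors by tuning a single exponent $\kappa$ in $T=x^\kappa$ is morally right but misses this extra degree of freedom, which is what actually pins down the constant $33.217$.
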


We should note that a result has been given by Cheng \cite{cheng}, in which he purports to show the above theorem for the range $n \geq \exp(\exp(15))$. We should, however, note that he incorrectly goes from

$$n^3 \geq \exp(\exp(45))$$
to 

$$n \geq \exp(\exp(15))$$
in establishing his result. There are some other errors also, notably in his proof of Theorem 3 in his paper \cite{cheng}, the first inequality sign is backwards and he has used Chebyshev's $\psi$-function instead of the $\theta$-function. 

On the other hand, we have the result of Ramar\'{e} and Saouter \cite{ramaresaouter} that every interval 

$$(x(1-\Delta^{-1}),x]$$
contains a prime for $x > x'$ and for various values of $\Delta(x')$ (given in Table 1 of their paper). It is a straightforward task to use this table to verify that there is a prime in $(x,x+3x^{2/3})$ for all $x \leq e^{60}$. One simply works through the table whilst checking that the Ramar\'{e}--Saouter interval is contained in the short interval. 

We should also mention the striking result of Baker, Harman and Pintz \cite{bakerharmanpintz}, that the interval $(x,x+x^{0.525})$ contains a prime for all sufficiently large $x$. This is tantalisingly close to $\theta = 1/2$, which would furnish a proof of Legendre's conjecture with at most finitely many exceptions.

\section{An effective version of the explicit formula}

\subsection{The result}

Our considerations begin with the von Mangoldt function:

\begin{displaymath}
   \Lambda(n) = \left\{
     \begin{array}{ll}
       \log p  & : \hspace{0.1in} n=p^m, \text{ $p$ is prime, $m \in \mathbb{N}$}\\
       0   & : \hspace{0.1in} \text{otherwise.}
     \end{array}
   \right.
\end{displaymath} 
As usual, we introduce the sum $\psi(x) = \sum_{n \leq x} \Lambda(n)$ to study the distribution of the prime numbers. The usefulness of this function presents itself in the \textit{explicit formula}

\begin{equation}
\psi(x) = x - \sum_{\rho} \frac{x^\rho}{\rho}-\log 2\pi - \frac{1}{2} \log(1-x^{-2})
\end{equation}
where $x$ is any positive non-integer and the sum is over all nontrivial zeroes $\rho$ of $\zeta(s)$ (see Titchmarsh \cite{titchmarsh1986theory} for details). One can see that feeding information regarding the zeroes of $\zeta(s)$ into the above formula yields estimates for the prime powers, allowing us then to obtain estimates regarding the primes. However, the explicit formula as seen above relies on estimates over \textit{all} of the nontrivial zeroes of $\zeta(s)$ and so is impractical for certain applications.

The Riemann Hypothesis asserts the fixedness of these zeroes to the line $\Re(s) = 1/2$. As this grand statement is yet to be confirmed, we often find more use in a \textit{truncated} version of the explicit formula in which one inputs information regarding the zeroes $\rho$ up to some height $T$, that is, with $| \Im \rho | < T$. It is our first intention to provide such a formula but with an explicit error term. Our result is as follows.

\begin{thm} \label{explicitformula}
Let $x>e^{60}$ be half an odd integer and suppose that $T \in (50,x)$ is not the ordinate of any zero of $\zeta(s)$. Then

\begin{equation} \label{explicitequation}
\psi(x) = x - \sum_{|\Im \rho| < T} \frac{x^{\rho}}{\rho} + O^*\Big( \frac{ 2 x \log^2 x}{T} \Big)
\end{equation}
where the notation $f = g + O^* (h)$ means $|f-g| < h$.
\end{thm}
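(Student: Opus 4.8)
The plan is to derive \eqref{explicitequation} from an effective form of the truncated Perron formula, followed by a shift of contour that collects the residues. Put $c = 1 + 1/\log x$; since $\Lambda$ is the Dirichlet coefficient sequence of $-\zeta'/\zeta$, Perron's formula in quantitative form gives $\psi(x) = -\frac{1}{2\pi i}\int_{c-iT}^{c+iT}\frac{\zeta'}{\zeta}(s)\,\frac{x^{s}}{s}\,ds + R$, where $|R| \leq \sum_{n\geq 1}\Lambda(n)(x/n)^{c}\min\big(1,(T|\log(x/n)|)^{-1}\big)$. Here the hypothesis that $x$ be half an odd integer is exactly what makes this error usable: every integer $n$ satisfies $|\log(x/n)| \geq \log\big(x/(x-\tfrac12)\big) \gg 1/x$, and the terms with $n$ near $x$ --- which dominate --- contribute $\ll (x/T)\sum_{k}\Lambda(x\pm k)/k$. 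Bounding $\Lambda(x\pm k) \leq \log x$ termwise turns this into a harmonic sum of length $\asymp\log x$, so $|R| \ll x\log^2 x/T$; the ranges $n\leq x/2$ and $n\geq 2x$ are controlled by $-\zeta'/\zeta(c) = \log x + O(1)$ and contribute only $O(x\log x/T)$.

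Next I would move the line of integration to the left, letting it recede to $\Re s = -\infty$. The residue at $s=1$ is $x$; at $s=0$ it is $-\zeta'/\zeta(0) = -\log 2\pi$; each nontrivial zero $\rho$ with $|\Im\rho| < T$ contributes $-x^{\rho}/\rho$; and the trivial zeros $s=-2,-4,\dots$ contribute $\sum_{k\geq 1}x^{-2k}/(2k) = -\tfrac12\log(1-x^{-2})$. So the shift reproduces Landau's explicit formula with the zero sum truncated at the chosen height, the price being the two horizontal integrals along $\Im s = \pm T$ (the far-left vertical integral tending to $0$). Since $x > e^{60}$ and $T < x$ force $2x\log^2 x/T > 2\log^2 x > 7000$, whereas $|{-\log 2\pi}| < 2$ and $|\tfrac12\log(1-x^{-2})| < x^{-2}$, those two constant-order terms vanish into the $O^*$.

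The one delicate point is the size of $\zeta'/\zeta$ on the horizontal segments: if $T$ lies within $o(1/\log x)$ of an ordinate $\gamma = \Im\rho$, then $|\zeta'/\zeta(\sigma+iT)|$ can be of order $|T-\gamma|^{-1}$, which is too large. So I would first replace $T$ by a nearby height $T^*$, with $|T-T^*|\leq 1$ and $T^*\in(50,x)$, chosen so that $|T^*-\gamma| \gg 1/\log x$ for every ordinate $\gamma$; this is possible because any unit interval contains $\ll\log x$ ordinates, so after deleting a neighbourhood of radius $\asymp 1/\log x$ about each, room remains. For such $T^*$ the partial-fraction bound $\frac{\zeta'}{\zeta}(s) = \sum_{|\Im\rho - \Im s|\leq 1}\frac{1}{s-\rho} + O(\log|\Im s|)$ gives $|\zeta'/\zeta(\sigma+iT^*)| \ll \log^2 x$ uniformly for $-1\leq\sigma\leq 2$, while for $\sigma < -1$ the crude bound $|\zeta'/\zeta(\sigma+iT^*)| \ll \log(|\sigma|+T^*)$ makes the far-left tail negligible. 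Hence the horizontal integrals are $\ll (1/T^*)\log^2 x\int_{-1}^{c}x^{\sigma}\,d\sigma \ll x\log x/T$, and replacing $T^*$ by $T$ in the zero sum costs $\ll x\log x/T$ as well, since only $\ll\log x$ ordinates lie between $T$ and $T^*$ and each corresponding term has modulus $x^{\Re\rho}/|\rho| \ll x/T$.

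Assembling everything gives $\psi(x) = x - \sum_{|\Im\rho|<T}x^{\rho}/\rho + O^*(h)$, where $h$ is a sum of one $\ll x\log^2 x/T$ term (with an explicit constant, from the Perron error near $n\approx x$) together with several $\ll x\log x/T$ and $O(1)$ terms. The \emph{hard part} is purely quantitative: to bring the leading constant down to $2$ one needs explicit versions of the zero-counting bound $N(t+1)-N(t-1)\leq C\log t$, of the gap lemma producing $T^*$, and of the partial-fraction formula for $\zeta'/\zeta$, plus an explicit Perron formula --- and then one must check that the comfortable slack afforded by $x > e^{60}$ and $T < x$ swallows all the $x\log x/T$ and $O(1)$ contributions, leaving exactly $2x\log^2 x/T$.
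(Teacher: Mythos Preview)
Your proposal is correct and follows essentially the same route as the paper: Perron's formula with $c=1+1/\log x$, the splitting of the Perron error into far terms (controlled by $-\zeta'/\zeta(c)$) and near terms (controlled via $|\log(x/n)|\gg k/x$ and the half-odd-integer hypothesis), a contour shift picking up the residues, and the replacement of $T$ by a nearby $T^*$ at distance $\gg 1/\log T$ from every ordinate so that the partial-fraction bound yields $|\zeta'/\zeta|\ll\log^2 T$ on the horizontal segments. One small slip: your ``harmonic sum of length $\asymp\log x$'' should read ``harmonic sum of \emph{value} $\asymp\log x$'' (its length is $\asymp x$), but your conclusion $|R|\ll x\log^2 x/T$ is right and the paper's explicit computation gives the Perron error as $2.8\,x\log^2 x/(\pi T)\approx 0.89\,x\log^2 x/T$, with the remaining pieces absorbed into the final constant $2$ exactly as you anticipate.
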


In the remainder of this paper, unless otherwise mentioned, we shall assume that $x$ and $T$ are in the ranges specified in Theorem \ref{explicitformula}, and that $x$ is half an odd integer (this assumption minimises the error term in the above theorem, as will become evident in the proof of Lemma $\ref{boundbigsum}$).

The author attempted to locate an effective explicit formula in the literature and found that which was given by Skewes \cite{skewes1955difference}, though the error term here is of better order. Liu and Wang \cite{liuwang} give a version of Theorem \ref{explicitformula} with an improved constant, but holding only for $T$ as a certain function of $x$, which is useful for estimates on Goldbach's weak conjecture but not for our application.

The method of proof for the explicit formula is well-known: we employ Perron's formula to express $\psi(x)$ as a contour integral over a vertical line. We then truncate this integral to one over a finite line segment. This is where we will pick up the bulk of the error term, and so more precision is needed here than anywhere else in this paper. Our line of integration is then shifted so as to acknowledge the residues and introduce the sum which accompanies $\psi(x)$ in Theorem \ref{explicitformula}. The crux of our working involves keeping track of the errors.

We proceed as laid out in Davenport's \cite{davenport1980multiplicative} well known expository text, though by working carefully we will obtain an explicit result. We should also note that the frame of this derivation can be applied to other arithmetic functions attached to an appropriate Dirichlet series. 

\subsection{Truncating the Line Integral}

For $c>0$, we define the contour integral

$$\delta(x) = \frac{1}{2 \pi i} \int_{c-i\infty}^{c+i\infty} \frac{x^s}{s} ds.$$
A good exercise (see Murty's \cite{murtyproblems} problem book) for budding students of analytic number theory (or complex analysis) is to show that

\begin{equation*}
\delta(x) =
\left\{
	\begin{array}{ll}
		0  & \mbox{if } 0 < x < 1 \\  
		1/2 & \mbox{if } x=1 \\  
		1 & \mbox{if } x > 1.
	\end{array}
\right.
\end{equation*}
The importance of this integral becomes apparent when one wishes to study the sum of an arithmetic function up to some value $x$, particularly when that function is generated by a Dirichlet series. In our case, we consider the following for a positive non-integer $x$ and $c>1$:

\begin{eqnarray*}
\psi(x) = \sum_{n \leq x} \Lambda(n) & = &  \sum_{n = 1}^{\infty} \Lambda(n) \delta\Big(\frac{x}{n} \Big) \\
& = & \sum_{n=1}^{\infty} \Lambda(n) \bigg[ \frac{1}{2 \pi i} \int_{c-i \infty}^{c+i \infty} \Big( \frac{x}{n} \Big)^s \frac{ds}{s} \bigg] \\
& = & \frac{1}{2 \pi i} \int_{c-i \infty}^{c+i \infty} \Big( \sum_{n=1}^{\infty} \frac{\Lambda(n)}{n^s} \Big) \frac{x^s}{s} ds.
\end{eqnarray*}
Notice that keeping $c>1$ gives absolute convergence to the series in the above equation, and thus justifies the interchange of integration and summation. The Dirichlet series in the above equation is known to be equal to $-\zeta'(s)/\zeta(s)$, and so we have that

$$\sum_{n \leq x} \Lambda(n) = \frac{1}{2 \pi i}  \int_{c-i \infty}^{c+i \infty} \Big( - \frac{\zeta'(s)}{\zeta(s)} \Big) \frac{x^s}{s} ds.$$
In a more general form this is known as Perron's formula. We may thus estimate the sum of the von Mangoldt function through some knowledge of certain analytic properties of $\zeta'(s)/\zeta(s)$. Our first step is to truncate the path of the integral to a finite segment, namely $(c-iT, c+iT)$. We define for $T > 0$ the truncated integral

$$I(x,T) =  \frac{1}{2 \pi i} \int_{c-i T}^{c+i T} \frac{x^s}{s} ds.$$

The next lemma is a variant of the first lemma in Davenport \cite[Ch.17]{davenport1980multiplicative}, and will bound the induced error term upon estimating $\delta(x)$ by $I(x,T)$. The proof is omitted here, for it is almost identical except in that we maintain the multiplicative constants $1/2 \pi$ throughout which arise from the contour integrals. 

\begin{lem}
For $x>0$ with $x \neq 1$, $c>0, T>0$ we have

\begin{equation*}
\delta(x) = I(x,T) + O^*\Big( \frac{x^c}{\pi T | \log x |}  \Big).
\end{equation*}

\end{lem}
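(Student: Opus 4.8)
The plan is to prove the standard truncation estimate for $\delta(x)$ by the classical contour-shifting argument, carefully retaining the $1/2\pi$ factors. First I would split into the cases $x>1$ and $0<x<1$, since the residue contribution differs. For $x>1$, I would consider the rectangular contour with vertices at $c\pm iT$ and $-U\pm iT$ (with $U>0$ eventually sent to infinity), noting that $x^s/s$ has a simple pole at $s=0$ with residue $1$, so by the residue theorem the integral around this rectangle equals $1 = \delta(x)$. Rearranging gives $I(x,T) - \delta(x)$ as a sum of three integrals: the two horizontal segments $\Im s = \pm T$ running from $\Re s = c$ back to $\Re s = -U$, and the left vertical segment $\Re s = -U$. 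For $0<x<1$ I would instead push the contour to the right (vertices $c\pm iT$, $U\pm iT$), enclosing no pole, so $I(x,T) = $ minus the sum of the analogous three integrals with no residue term.

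Next I would estimate each piece. On a horizontal segment with $\Im s = \pm T$, one has $|s| \geq T$ and $|x^s| = x^{\Re s} = x^\sigma$, so the contribution is at most $\frac{1}{2\pi T}\int |x^\sigma|\,d\sigma$. For $x>1$ this integral over $\sigma \in (-\infty, c]$ is $\int_{-\infty}^{c} x^\sigma\, d\sigma = x^c/\log x$; for $0<x<1$ the integral over $\sigma \in [c,\infty)$ is $x^c/|\log x|$. Either way each horizontal segment contributes at most $\frac{x^c}{2\pi T |\log x|}$. For the remaining vertical segment, in the $x>1$ case the integrand on $\Re s = -U$ is bounded by $x^{-U}/U$ times the length $2T$, which tends to $0$ as $U\to\infty$; in the $x<1$ case the integrand on $\Re s = U$ is bounded by $x^U/U \cdot 2T \to 0$. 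Combining the two horizontal contributions gives the claimed bound $\frac{x^c}{\pi T|\log x|}$ as an $O^*$ error, with the vertical piece vanishing in the limit.

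The main subtlety — and really the only place care is needed — is the bookkeeping of the $2\pi$ constants and making sure the two horizontal segments add (rather than partially cancel), which they do since we are bounding absolute values; this is exactly why the stated constant is $1/(\pi T|\log x|)$ rather than $1/(2\pi T |\log x|)$. I would also need to justify that the finite integrals $I(x,T)$ and the horizontal integrals are genuinely convergent/well-defined (they are, being integrals of a continuous function over a compact segment), and that the limit $U\to\infty$ may be taken, which is immediate from the uniform bounds just described. Since the argument is word-for-word Davenport's apart from these constants, and the referenced lemma explicitly says the proof is omitted, I would simply indicate this reduction and record the estimate; there is no genuine obstacle here beyond diligent constant-tracking.
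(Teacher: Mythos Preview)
Your proposal is correct and is precisely the approach the paper intends: the paper omits the proof, stating only that it follows Davenport's argument in Chapter~17 verbatim while retaining the $1/2\pi$ factors from the contour integrals, and your sketch does exactly that. The only point worth double-checking is that the $O^*$ bound is \emph{strict}; this holds because $|s|=\sqrt{\sigma^2+T^2}>T$ except at the single point $\sigma=0$, so the horizontal integrals are genuinely strictly smaller than $\frac{x^c}{2\pi T|\log x|}$ each.
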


We can now employ this bound to estimate $\psi(x)$ in the following way.

\begin{eqnarray*}
\psi(x) & = &  \sum_{n = 1}^{\infty} \Lambda(n) \delta\Big(\frac{x}{n} \Big) \\ \\
& = &  \sum_{n = 1}^{\infty} \Lambda(n) \Big[I(\frac{x}{n},T) + O^*\Big(\frac{1}{\pi T} \Big(\frac{x}{n}\Big)^c  \Big| \log \frac{x}{n} \Big|^{-1} \Big)\Big] \\ \\
& = &  \frac{1}{2 \pi i} \int_{c-i T}^{c+i T} - \frac{\zeta'(s)}{\zeta(s)} \frac{x^s}{s} ds + \frac{1}{\pi T} O^* \Big( \sum_{n=1}^{\infty}  \Lambda(n) \Big(\frac{x}{n}\Big)^c \Big| \log \frac{x}{n} \Big|^{-1} \Big).
\end{eqnarray*}
We proceed to bound the sum in the above formula, by splitting it up and estimating it in parts. 

\begin{lem} \label{boundbigsum}
Let $x > e^{60}$ be half an odd integer and set $c = 1+1/\log x$. Then

\begin{equation}
\sum_{n=1}^{\infty} \Lambda(n) \Big( \frac{x}{n} \Big)^c \Big| \log \frac{x}{n} \Big|^{-1} < 2.8 x \log^2 x.
\end{equation}

\end{lem}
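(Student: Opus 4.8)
The plan is to split the sum $S := \sum_{n=1}^\infty \Lambda(n)(x/n)^c |\log(x/n)|^{-1}$ according to the location of $n$ relative to $x$, so that the factor $|\log(x/n)|^{-1}$ is controlled separately in each regime. Since $x$ is half an odd integer, $n$ never equals $x$, and in fact the values of $n$ closest to $x$ (namely the integers bracketing $x$) satisfy $|x-n| \geq 1/2$, which is exactly where the hypothesis that $x$ is half an odd integer earns its keep. I would divide into: (i) $n \leq \tfrac{3}{4}x$ or $n \geq \tfrac{5}{4}x$, the ``far'' range, where $|\log(x/n)| \gg 1$; (ii) the ``near'' range $\tfrac{3}{4}x < n < \tfrac{5}{4}x$ with $n \ne$ the two integers nearest $x$; and (iii) the two integers $n_0$ immediately below and above $x$.

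For the far range (i), $|\log(x/n)|^{-1}$ is bounded by an absolute constant (e.g. $(\log \tfrac{4}{3})^{-1}$), so the contribution is $\ll \sum_{n \le 3x/4} \Lambda(n)(x/n)^c + x^c\sum_{n \ge 5x/4}\Lambda(n)n^{-c}$. The first piece is $\le x^c \sum_{n\le 3x/4}\Lambda(n) n^{-c}$; with $c = 1+1/\log x$ one has $n^{-c} = n^{-1}n^{-1/\log x} \le n^{-1}$ and $\sum_{n\le x}\Lambda(n)/n \le \log x + O(1)$ (a standard Mertens-type estimate), while $x^c = x\cdot x^{1/\log x} = ex$; so this piece is $\ll x\log x$. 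The tail $\sum_{n\ge 5x/4}\Lambda(n)n^{-c}$ is $\ll \sum_{n\ge 5x/4}\Lambda(n)n^{-1-1/\log x}$, which by partial summation against $\psi(t)\ll t$ is $\ll (\log x) x^{-1/\log x}\cdot 1 \ll \log x$, so after multiplying by $x^c \ll x$ this too is $\ll x\log x$ — actually I would be more careful and just bound it by $x^c \cdot \zeta(c)\cdot(\text{something})$; in any case it is lower order. For the near range (ii), write $n = x + k$ with $|k| \ge 1/2$ (and $|k|$ an integer or half-integer $\ge 1/2$, but crucially $|k|\ge 1$ once we exclude the two nearest integers, or $|k| \ge 1/2$ including them); then $|\log(x/n)| = |\log(1+k/x)| \asymp |k|/x$, so $|\log(x/n)|^{-1} \ll x/|k|$, and $(x/n)^c \le (4/3)^c \ll 1$. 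Hence the near contribution is $\ll \sum_{1/2 \le |k| \le x/4} \Lambda(\lfloor x\rfloor + k \text{ or so})\cdot \tfrac{x}{|k|} \ll x\log x \sum_{1\le |k|\le x/4}\tfrac1{|k|} \ll x\log^2 x$, using the trivial bound $\Lambda(n)\le \log x$. This is the term that produces the $\log^2 x$ and hence the dominant one.

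For (iii), the two integers $n_0$ nearest $x$: here $|\log(x/n_0)|^{-1} \ll x/|x-n_0| \le 2x$ since $|x-n_0|\ge 1/2$, and $(x/n_0)^c \ll 1$, $\Lambda(n_0) \le \log x$, so each contributes $\ll x\log x$, again lower order. Summing (i)--(iii), the total is $2.8\,x\log^2 x$ provided the constants in the near-range estimate are tracked honestly: $\sum_{1\le k\le x/4} 1/k < \log(x/4) + 1 < \log x$, the factor $2$ from $\pm k$, the factor $\log x$ from $\Lambda$, and the factor $(4/3)^c < 2$ or so from $(x/n)^c$ must all be multiplied out and checked to land below $2.8$ once the lower-order $x\log x$ terms from (i) and (iii) are absorbed (this is where $x > e^{60}$ is used, to make $x\log x$ negligible against $x\log^2 x$). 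The main obstacle is precisely this bookkeeping: getting an honest constant below $2.8$ requires being slightly clever about the near range — e.g. comparing $\sum 1/|k|$ to $\int dt/t$ rather than using a crude bound, and noting that $(x/n)^c$ is actually very close to $1$ across the bulk of the near range — rather than any conceptual difficulty. I expect the cleanest route is to prove each of (i) and (iii) contributes $< 0.1\, x\log^2 x$ for $x>e^{60}$, and the near range (ii)+(iii) together contributes $< 2.7\,x\log^2 x$, giving the claim with room to spare.
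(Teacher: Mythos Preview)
Your approach is essentially the same as the paper's: the author also splits into a far range, a near range, and the two integers adjacent to $x$, obtains the dominant $x\log^2 x$ from the harmonic sum $\sum 1/v$ in the near range, and uses $x>e^{60}$ to absorb the $x\log x$ contributions from the other pieces. The only notable differences are that the paper carries a free cutoff parameter $\alpha\in(1,2)$ (optimised to $\alpha=1.194$) rather than your fixed $3/4,\,5/4$, and handles the far range more cleanly by extending both tails to all of $\mathbb{N}$ and bounding the resulting full Dirichlet series $-\zeta'(c)/\zeta(c)<\log x$ directly, avoiding your separate Mertens and partial-summation arguments.
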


\begin{proof}
Some care needs to be taken here. When $x$ and $n$ are quite close, the reciprocal $\log$ will become large. Thus, we introduce the parameter $\alpha \in (1,2)$ and break up the infinite sum:

\begin{eqnarray*}
\sum_{n=1}^{\infty} & = & \sum_{n=1}^{[x/\alpha]} + \sum_{n=[x/\alpha] + 1}^{[x]-1} + \sum_{n = [x]}^{[x]+1} + \sum_{n = [x]+2}^{[\alpha x]}+\sum_{n=[\alpha x]+1}^{\infty} 
\end{eqnarray*}
On the right side of the above formula, denote the $i$th sum by $S_i$. The reader should be convinced by this division; $S_3$ deals with the most inflated terms, namely when $n$ is either side of $x$. Then $S_2$ and $S_4$ measure the remainder of the region which is \textit{close} to $x$. We also note that $S_1$ and $S_5$ contribute little and can be estimated almost trivially.

Considering the range of $n$ in $S_1$ and $S_5$, we have

$$\Big| \log \frac{x}{n} \Big| > \log \alpha.$$
Inserting this into these sums, pulling out terms which are independent on $n$, and extending the range of summation to $\mathbb{N}$ we arrive at

$$S_1 + S_5 < \frac{x^c}{\log \alpha} \sum_{n=1}^{\infty}\frac{\Lambda(n)}{n^c} = \frac{x^c}{\log \alpha} \Big( -\frac{\zeta'(c)}{\zeta(c)} \Big).$$
We then use Lemma 3.1 from \cite{ford} to obtain

\begin{equation} \label{s1s5}
S_1+S_5 < \frac{ex \log x}{\log \alpha}.
\end{equation} 

We now turn our attention to $S_3$, which is the sum of only two things. It follows, using the fact that $[x]=x-1/2$ and the trivial bound $\Lambda(n) \leq \log n$, that

\begin{eqnarray*}
S_3 & = & \Big( \frac{x}{x-\frac{1}{2}} \Big)^c \Lambda(x-1/2) \Big| \log \frac{x}{x-\frac{1}{2}} \Big| +  \Big( \frac{x}{x+\frac{1}{2}} \Big)^c \Lambda(x+1/2) \Big| \log \frac{x}{x+\frac{1}{2}} \Big| \\
& < & 4x \log x \Big( \frac{x}{x-\frac{1}{2}} \Big)^c.
\end{eqnarray*}
This will actually be of little consequence to the final sum (as we will soon see), and so we feel no remorse in collecting here the weaker bound

\begin{equation} \label{s3}
S_3 < 5 x \log x.
\end{equation}

In consideration of $S_2$, we estimate $x/n < \alpha$ and $\Lambda(n) \leq \log n$ to get

$$S_2 < \alpha^c \log x \sum_{n=[x/\alpha]+1}^{[x]-1} \Big| \log \frac{x}{n} \Big|^{-1}.$$
If we let $n = [x] - v$, then the problem becomes that of summing over $v=1,2,\ldots, [x]-[x/\alpha]-1$. We have

$$\Big| \log \frac{x}{n} \Big| = \log \frac{x}{n} > \log \frac{[x]}{n} = - \log \Big(1- \frac{v}{[x]} \Big) > \frac{v}{[x]}$$
and thus

$$S_2 < \alpha^c x \log x \sum_{v=1}^{[x]-[x/\alpha]-1} \frac{1}{v}.$$
One can estimate this by the known bound $\sum_{n \leq x} 1/n \leq \log x + \gamma + 1/x$ where $\gamma \approx 0.5772\ldots$ is Euler's constant to get:

\begin{equation} \label{s2}
S_2 < \alpha^c x \log x \Big( \log(x - x/\alpha)) + \gamma + \frac{1}{x-x/\alpha} \Big).
\end{equation}

The sum $S_4$ is dealt with similarly to get

\begin{equation} \label{s4}
S_4 < 2 \frac{\log( \alpha x)}{3 - \alpha} \Big( \log( \alpha x - x) + \gamma + \frac{1}{\alpha x - x} \Big).
\end{equation}

Finally, one may combine $(\ref{s1s5}), (\ref{s3}), (\ref{s2}), (\ref{s4})$ to get an inequality of the form

\begin{equation}
\sum_{n=1}^{\infty} \Lambda(n) \Big( \frac{x}{n} \Big)^c \Big| \log \frac{x}{n} \Big|^{-1} < f(\alpha, x).
\end{equation}
The result follows now from choosing $\alpha = 1.194$ and letting $x>e^{60}$.

\end{proof}

\subsection{Error Bounds}

The immediate result of Lemma $\ref{boundbigsum}$ is that

\begin{eqnarray*}
\psi(x) & = &   \frac{1}{2 \pi i} \int_{c-i T}^{c+i T} - \frac{\zeta'(s)}{\zeta(s)} \frac{x^s}{s} ds + O^* \Big( \frac{2.8 x \log^2 x}{\pi T} \Big)
\end{eqnarray*}
for $x>e^{60}$, $c=1+1/\log x$ and $T>0$. We now look to shifting the line of integration so that we might involve the residues of the integrand. In doing so, we incur errors which only slightly increase the above error term. That is, the bulk of the error has already been obtained, and so we can be excused for not pursuing the best possible bound in the remainder of this section. Let $U>0$ and define the line segments 

$$C_1 = [c-iT,c+iT] \hspace{1in} C_2 = [c+iT,-U+iT]$$ 

$$C_{3} = [-U+iT,-U-iT] \hspace{0.8in} C_{4} = [-U-iT,c-iT]$$
and their union $C$ along with the corresponding integrals

$$I_i = \frac{1}{2 \pi i} \int_{C_i} - \frac{\zeta'(s)}{\zeta(s)} \frac{x^s}{s} ds.$$
We also denote by $I$ the integral around the rectangle $C$. Note that we need to account for the fact that while $T$ is stipulated not to be the ordinate of a zero of $\zeta(s)$, it might be undesirably close to such. We show in Lemma $\ref{choicet}$ that there is always some good choice of $T$ nearby, and so some work will be required later to shift our horizontal paths. Also note that any work we do in bounding $I_2$ will also hold for $I_{4}$ and so it follows that

\begin{equation} \label{psi}
| \psi(x) - I | < 2 |I_2|+|I_{3}| + 2.8 \frac{x \log^2 x}{\pi T}.
\end{equation}

One can use Cauchy's theorem (see Davenport \cite{davenport1980multiplicative} for full details) to show that

\begin{equation*} 
I = x - \sum_{|\gamma| < T} \frac{x^{\rho}}{\rho} - \frac{\zeta'(0)}{\zeta(0)}+\sum_{0 < 2m < U} \frac{x^{-2m}}{2m}.
\end{equation*}
Noting that the rightmost summation is a partial sum of the series for $\log(1-x^{-2})/2$, we can write that

$$\psi(x) = x - \sum_{\rho} \frac{x^{\rho}}{\rho} + O^*\big(E(x,T,U)\big)$$
where 

\begin{equation} \label{bigerror}
E(x,T,U) < \frac{\zeta'(0)}{\zeta(0)} + \frac{1}{2} \log(1-x^{-2}) + 2 |I_2|+|I_{3}| + 2.8 \frac{x \log^2 x}{\pi T}.
\end{equation}
It so remains to bound $|I_2|$ and $|I_{3}|$ by deriving and making use of explicit estimates for $| \zeta'(s)/\zeta(s)|$ in appropriate regions.

\subsection{Explicit Bounds for $|\zeta'(s)/\zeta(s)|$}

We first establish a bound on the lengths of the rectangle $C$ that intersect with the half-plane $\sigma \leq -1$. Our contour, or rather $U$, is chosen so that we might avoid the poles of $\tan \pi s/2$ which occur at the odd integers. 

\begin{lem}  \label{bound1}
Let $U$ be an even integer. Then

\begin{equation*}
\Big| \frac{\zeta'(s)}{\zeta(s)} \Big| < 9 + \log | s|
\end{equation*}
on the intersection of $C$ with $\sigma \leq -1$.

\end{lem}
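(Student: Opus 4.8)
The plan is to reduce everything to the functional equation of $\zeta$, since the part of $C$ with $\sigma\le -1$ lies far to the left of the critical strip. Writing the functional equation as $\zeta(s)=2^{s}\pi^{s-1}\sin(\pi s/2)\,\Gamma(1-s)\,\zeta(1-s)$ and differentiating logarithmically gives, for $\sigma\le -1$,
\[
\frac{\zeta'(s)}{\zeta(s)}=\log 2\pi+\frac{\pi}{2}\cot\frac{\pi s}{2}-\frac{\Gamma'(1-s)}{\Gamma(1-s)}-\frac{\zeta'(1-s)}{\zeta(1-s)},
\]
and I would estimate the four summands separately. The constant is below $1.84$. The last term is harmless: $\Re(1-s)\ge 2$ places us inside the half-plane of absolute convergence of the Dirichlet series for $-\zeta'/\zeta$, so its modulus is at most $\sum_{n}\Lambda(n)n^{-2}=-\zeta'(2)/\zeta(2)<0.57$. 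For the gamma term, $\Re(1-s)\ge 2$ makes an explicit Stirling (or Binet) estimate available in the form $\Gamma'(w)/\Gamma(w)=\log w+O^{*}(1/2)$; combined with $|\arg(1-s)|\le\pi/2$ and $|1-s|\le 1+|s|\le 2|s|$, this gives $|\Gamma'(1-s)/\Gamma(1-s)|\le\log|1-s|+|\arg(1-s)|+\tfrac12\le\log|s|+\log 2+\tfrac{\pi}{2}+\tfrac12$.

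The only delicate summand is $\tfrac{\pi}{2}\cot(\pi s/2)$, and controlling it is exactly what the choice of $U$ is for: the poles of the trigonometric factor sit at integers (and coincide with the trivial zeros of $\zeta$, where $\zeta'/\zeta$ itself is unbounded), so the three pieces $C_{2}\cap\{\sigma\le -1\}$, $C_{3}$ and $C_{4}\cap\{\sigma\le -1\}$ have to be kept clear of them, which is the point of the remark preceding the lemma. On the two horizontal pieces, $|\Im s|=T>50$ forces $\cot(\pi s/2)$ to within an exponentially small distance of $\mp i$, so there $\big|\tfrac{\pi}{2}\cot(\pi s/2)\big|<\tfrac{\pi}{2}+\varepsilon$; on the vertical piece $\sigma=-U$, periodicity collapses $\cot(\pi s/2)$ to $\pm i$ times $\tanh(\pi(\Im s)/2)$ once $\sigma=-U$ is clear of the poles, hence modulus at most $1$ and the term at most $\tfrac{\pi}{2}$. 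Adding the four bounds,
\[
\Big|\frac{\zeta'(s)}{\zeta(s)}\Big|<1.84+\frac{\pi}{2}+\Big(\log|s|+\log 2+\frac{\pi}{2}+\frac12\Big)+0.57<9+\log|s|,
\]
with roughly $2$ to spare; and $|s|\ge 1$ on $C\cap\{\sigma\le -1\}$, so $\log|s|\ge 0$.

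I expect the genuine work to be the bookkeeping at the vertical edge $C_{3}$: one must verify that the chosen value of $U$ simultaneously keeps $\tfrac{\pi}{2}\cot(\pi s/2)$ bounded and keeps the line $\sigma=-U$ off the trivial zeros of $\zeta$, and one should check that the digamma estimate really does hold with the explicit constant $1/2$ throughout the half-plane $\Re w\ge 2$, not merely asymptotically. The remaining pieces --- the constant term, the absolutely convergent tail, and the crude inequality $|1-s|\le 2|s|$ --- are routine.
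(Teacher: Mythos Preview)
Your approach is the same as the paper's: take the logarithmic derivative of the functional equation and bound the four pieces separately. The paper works with the reflected variable and obtains $\tfrac{\pi}{2}\tan(\pi s/2)$ where you obtain $\tfrac{\pi}{2}\cot(\pi s/2)$; the Dirichlet-series bound $|\zeta'/\zeta|\le -\zeta'(2)/\zeta(2)$ and the Stirling/digamma estimate are handled identically in both, and your numerical tally is fine.

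There is, however, a genuine slip in your treatment of the trigonometric term on $C_3$. The poles of $\cot(\pi s/2)$ lie at the \emph{even} integers (where $\sin(\pi s/2)=0$), not at all integers, and the hypothesis of the lemma is precisely that $U$ is even. On the line $\sigma=-U$ with $U=2m$ one has
\[
\cot\!\Big(\frac{\pi(-2m+it)}{2}\Big)=\cot\!\Big(-\pi m+\tfrac{i\pi t}{2}\Big)=-i\,\coth\!\Big(\frac{\pi t}{2}\Big),
\]
not $\pm i\tanh(\pi t/2)$; its modulus is at least $1$ everywhere and blows up as $t\to 0$. Your claimed $\tanh$ reduction would hold if $U$ were \emph{odd}. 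This is not an artefact of the decomposition: $s=-U$ is a trivial zero of $\zeta$, so $\zeta'/\zeta$ really does have a simple pole there, and the inequality asserted in the lemma is literally false at that single point of $C_3$.

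For fairness, the paper's own proof has the mirror image of this oversight: in its $\tan$ formulation the reflected abscissa is $1+U$, which is odd, and the stated condition ``distance at least $1$ from odd integers on the real axis'' fails near $t=0$ on $C_3$. In the downstream application the factor $x^{-U}$ (with $U$ chosen near $x>e^{60}$) makes the contribution of $I_3$ utterly negligible, so the defect is cosmetic there; but you should not assert a uniform pointwise bound on $C_3$ that does not hold, and your ``bookkeeping'' paragraph should flag this explicitly rather than claim it resolves cleanly.
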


\begin{proof}

Consider the logarithmic derivative of the functional equation

$$\frac{\zeta'(1-s)}{\zeta(1-s)} = -\log 2\pi - \frac{1}{2} \pi \tan \frac{\pi s}{2} + \frac{\Gamma'(s)}{\Gamma(s)} + \frac{\zeta'(s)}{\zeta(s)}.$$
Let $\sigma \geq 2$ (so that $1-\sigma \leq -1$) and notice that $| \frac{1}{2} \pi \tan \frac{\pi s}{2}| < 2$ so long as $s$ is distanced by at least $1$ from odd integers on the real axis (this justifies our choice of $U$). We can then use

\begin{equation}  \label{gamma}
\frac{\Gamma'(s)}{\Gamma(s)} = \log s - \frac{1}{2s} - \int_0^{\infty} \frac{[u]-u+1/2}{(u+s)^2}du
\end{equation}
to bound $|\Gamma'(s)/\Gamma(s)|$ trivially. The result then follows by observing that

\begin{equation} \label{trivialzetabound}
\Big| \frac{\zeta'(s)}{\zeta(s)} \Big| \leq - \frac{\zeta'(2)}{\zeta(2)}  < \frac{3}{5}
\end{equation}
and putting it all together.

\end{proof}

We now look to the harder task of establishing a bound over the region that includes the critical strip, as is essential for the estimation of $I_2$. 

\begin{lem} \label{zetaaszeroes}
Let $s = \sigma +it$ where $\sigma > -1$ and $t >50$. Then

\begin{equation}
\frac{\zeta'(s)}{\zeta(s)} = \sum_{\rho} \Big(\frac{1}{s-\rho}-\frac{1}{2+it-\rho} \Big) + O^*(2 \log t),
\end{equation}

\end{lem}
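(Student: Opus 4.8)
The plan is to start from the Hadamard-type partial fraction decomposition for $\zeta'/\zeta$, which reads
\[
\frac{\zeta'(s)}{\zeta(s)} = B - \frac{1}{s-1} + \frac{1}{2}\log\pi - \frac{1}{2}\frac{\Gamma'(s/2+1)}{\Gamma(s/2+1)} + \sum_{\rho}\Big(\frac{1}{s-\rho}+\frac{1}{\rho}\Big),
\]
where $B = \sum_\rho \rho^{-1}$ in the symmetric sense. The standard trick (following Davenport, Ch.~12) is to write this identity both at the point $s$ and at the reference point $2+it$, and subtract. The constant $B$, the $\log\pi$ term, and the $1/\rho$ inside the sum all cancel, leaving
\[
\frac{\zeta'(s)}{\zeta(s)} = \sum_{\rho}\Big(\frac{1}{s-\rho}-\frac{1}{2+it-\rho}\Big) - \frac{1}{s-1} + \frac{1}{2+it-1} - \frac{1}{2}\Big(\frac{\Gamma'(s/2+1)}{\Gamma(s/2+1)} - \frac{\Gamma'((2+it)/2+1)}{\Gamma((2+it)/2+1)}\Big).
\]
So the whole task reduces to showing the three explicit (non-sum) terms on the right are together bounded by $2\log t$ in absolute value when $\sigma > -1$ and $t > 50$.

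Next I would bound each piece. The two $1/(s-1)$-type terms are trivial: $|1/(s-1)| \le 1/(t-\mathrm{something})$ and $|1/(1+it)| \le 1/t$, both of which are $O^*(1/40)$ or so for $t>50$ — utterly negligible against $2\log t$. The digamma difference is the substantive part: using the integral representation already quoted in the paper as equation~(\ref{gamma}), namely $\Gamma'(w)/\Gamma(w) = \log w - 1/(2w) - \int_0^\infty ([u]-u+1/2)(u+w)^{-2}\,du$, I get $|\Gamma'(w)/\Gamma(w) - \log w| \le 1/(2|w|) + \tfrac{1}{2}\int_0^\infty |u+w|^{-2}\,du$, and the integral is $O(1/\Re w)$ when $\Re w > 0$. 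Applying this at $w = s/2+1$ (where $\Re w > 1/2$ since $\sigma > -1$) and at $w = (2+it)/2+1 = 2 + it/2$, the error pieces are bounded by small absolute constants for $t>50$, so up to those constants the digamma difference is $\tfrac12|\log(s/2+1) - \log(2+it/2)|$. Since both arguments have imaginary part $t/2$ and real parts in a bounded range $[1/2, 2]$, their ratio is $1 + O(1/t)$, so this logarithm is $O^*(1/t)$ as well. Collecting: the non-sum terms total far less than $2\log t$ — in fact less than any fixed small multiple of $\log t$ — for $t > 50$, which is exactly the claim with room to spare. The generous constant $2$ in $O^*(2\log t)$ absorbs all the crude constants comfortably.

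The main obstacle, such as it is, is purely bookkeeping: one must be careful that the subtraction of the two partial-fraction identities is legitimate (the series $\sum_\rho(1/(s-\rho)+1/\rho)$ converges only conditionally, so the rearrangement that pairs $1/(s-\rho)$ with $-1/(2+it-\rho)$ and cancels the $1/\rho$ must be justified by the usual symmetric-in-$\rho$ argument, e.g. pairing $\rho$ with $1-\rho$ or grouping zeros by bounded blocks of $|\Im\rho|$), and that the digamma integral bound is applied on the correct half-plane — one needs $\Re(s/2+1) > 0$, which holds precisely because $\sigma > -1$, and this is presumably why that hypothesis appears. Everything else is estimation of elementary functions, and the only real decision is to be wasteful enough that the constants never threaten the bound $2\log t$; since $\log 50 > 3.9$, even a total contribution of $1$ or $2$ from all the non-sum terms is swallowed easily.
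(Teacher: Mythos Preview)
Your approach is exactly the paper's: write the partial-fraction identity at $s$ and at $s_0=2+it$ and subtract, then estimate the surviving terms. There is, however, one bookkeeping slip. When you subtract, the left-hand side becomes $\dfrac{\zeta'(s)}{\zeta(s)}-\dfrac{\zeta'(2+it)}{\zeta(2+it)}$, not just $\dfrac{\zeta'(s)}{\zeta(s)}$; consequently your displayed formula is missing a $\dfrac{\zeta'(2+it)}{\zeta(2+it)}$ on the right. This is precisely the term the paper handles with the inequality~(\ref{trivialzetabound}), namely $\bigl|\zeta'(2+it)/\zeta(2+it)\bigr|\le -\zeta'(2)/\zeta(2)<3/5$. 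Once you restore it and bound it by $3/5$, everything you wrote goes through unchanged and the $2\log t$ budget still has plenty of slack.

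One further remark: your claim that the digamma arguments have ``real parts in a bounded range $[1/2,2]$'' tacitly assumes $\sigma\le 2$. The lemma as stated imposes only $\sigma>-1$, so strictly speaking your estimate of the digamma difference as $O^*(1/t)$ is not valid for arbitrarily large $\sigma$. In the paper's application one always has $-1\le\sigma\le c=1+1/\log x<2$, so this does not matter for the intended use, but it is worth being aware of.
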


\begin{proof}
We start with the equation (see 12.8 of Davenport \cite{davenport1980multiplicative})

\begin{equation} \label{zetazeroes}
-\frac{\zeta'(s)}{\zeta(s)} = \frac{1}{s-1} - B -\frac{1}{2} \log \pi + \frac{\Gamma'(\frac{s}{2}+1)}{2 \Gamma(\frac{s}{2}+1)} - \sum_{\rho} \Big(\frac{1}{s-\rho}+\frac{1}{\rho} \Big)
\end{equation}
where $B = -\gamma/2 - 1 +\frac{1}{2} \log 4 \pi$. Successively, we set $s_0 = 2+it$ and $s = \sigma +it$ and then find the difference between the two expressions. The terms involving the $\Gamma$-function are dealt with using (\ref{gamma}), whereas the rest are estimated either trivially or with (\ref{trivialzetabound}) to arrive at the result.

\end{proof}

We can estimate the sum in Lemma \ref{zetaaszeroes} by breaking it into two smaller sums $S_1$ and $S_2$, where $S_1$ ranges over the zeroes $\rho = \beta+i \gamma$ with $|\gamma-t| \geq 1$ and $S_2$ is over the remaining zeroes. 

\begin{lem}
Let $s = \sigma +it$, where $\sigma > -1$ and $t >50$. Then

$$S_1 =  \sum_{|t-\gamma| \geq 1} \Big(\frac{1}{s-\rho}-\frac{1}{2+it-\rho} \Big)  = O^*(16 \log t).$$
\end{lem}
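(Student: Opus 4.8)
The plan is to bound $S_1 = \sum_{|t-\gamma| \geq 1} \left(\frac{1}{s-\rho}-\frac{1}{2+it-\rho}\right)$ by combining the two terms of each summand over a common denominator and then estimating the resulting sum against the counting function for zeros. First I would write
\[
\frac{1}{s-\rho} - \frac{1}{2+it-\rho} = \frac{2-\sigma}{(s-\rho)(2+it-\rho)},
\]
so that $|S_1| \leq \sum_{|t-\gamma|\geq 1} \frac{|2-\sigma|}{|s-\rho|\,|2+it-\rho|}$. Since $\sigma > -1$ we have $|2-\sigma| < 3$, and since $\beta \in (0,1)$ we have $|2+it-\rho| \geq |2-\beta| > 1$ and also $|s-\rho| \geq |t-\gamma| \geq 1$; more usefully, $|2+it-\rho|^2 = (2-\beta)^2 + (t-\gamma)^2 \geq 1 + (t-\gamma)^2$ and $|s-\rho| \geq |t-\gamma|$. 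Combining these gives a bound of the shape $|S_1| \leq 3\sum_{|t-\gamma|\geq 1} \frac{1}{(t-\gamma)^2}$ (or the slightly sharper $\frac{1}{|t-\gamma|\sqrt{1+(t-\gamma)^2}}$), reducing everything to a sum over ordinates $\gamma$ of zeros weighted by $1/(t-\gamma)^2$.

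Next I would convert that sum into an integral against $dN(u)$, where $N(u)$ counts zeros with ordinate in $(0,u)$ (or zeros up to height $u$ symmetrically). The standard Riemann--von Mangoldt estimate, in the explicit form available in Davenport --- namely that the number of zeros with $|\gamma - t| \leq 1$ is $O^*(c_1 \log t)$ for an explicit $c_1$, and more generally $N(t+1) - N(t) \ll \log t$ --- lets one dyadically decompose the range $|t-\gamma| \geq 1$ into blocks $n \leq |t - \gamma| < n+1$. Each block contributes at most $(c_1 \log(t+n))/n^2 \leq (c_1 \log(2t) )/n^2$ worth of terms once $n \lesssim t$, and for $n \gtrsim t$ the zeros are at height $\gg n$ so $\log(t+n) \ll \log n$ and the tail $\sum_{n} \frac{\log n}{n^2}$ converges. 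Summing $\sum_{n\geq 1} 1/n^2 = \pi^2/6$ against the $O(\log t)$ density then yields $|S_1| = O^*(C \log t)$ for an absolute constant $C$; the claim is that one can arrange $C = 16$.

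The main obstacle will be bookkeeping the constants so that the final bound is genuinely $16 \log t$ and not something larger, since $3 \cdot \pi^2/6 \approx 4.93$ already eats a chunk of the budget and the zero-counting constant $c_1$ from the explicit Riemann--von Mangoldt formula is not tiny. Care is needed in two places: first, the transition region where $n$ is comparable to $t$, where one must not lose a spurious factor by over-counting zeros; and second, the use of $t > 50$, which is precisely what makes $\log t$ large enough to absorb the lower-order terms (constants, the $\log(2t) = \log 2 + \log t$ slippage, and the finitely many zeros with small ordinate that get pulled in when $t-\gamma$ is large). I would handle the first by splitting at $n = [t]$ and using the crude bound $N(t+n) \leq N(2n) \ll n \log n$ for $n \geq t$, and the second by noting $\log 2 / \log 50 < 0.18$ so all such slippages are comfortably controlled. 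Everything else is routine: combining fractions, the triangle inequality, and summing a convergent series.
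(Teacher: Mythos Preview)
Your initial reduction to $|S_1| \leq 3\sum_{|t-\gamma|\geq 1}\frac{1}{(t-\gamma)^2}$ matches the paper exactly. Where you diverge is in bounding this last sum: you propose block decomposition and explicit Riemann--von Mangoldt zero-counts, whereas the paper avoids zero-counting altogether. It observes that $\frac{3}{(t-\gamma)^2} \leq \frac{6}{1+(t-\gamma)^2}$ for $|t-\gamma|\geq 1$, extends the sum to \emph{all} $\rho$, and then uses $\frac{1}{1+(t-\gamma)^2} \leq \frac{4}{4+(t-\gamma)^2}$. The point is that $\sum_\rho \frac{1}{4+(t-\gamma)^2}$ can be bounded directly by taking real parts in the partial-fraction expansion of $-\zeta'/\zeta$ at $s=2+it$ (equation~(\ref{zetazeroes})), since $\Re\frac{1}{2+it-\rho} = \frac{2-\beta}{(2-\beta)^2+(t-\gamma)^2} > \frac{1}{4+(t-\gamma)^2}$ and $\Re\frac{1}{\rho}>0$; estimating the $\Gamma'/\Gamma$ and remaining terms gives $\sum_\rho \frac{1}{4+(t-\gamma)^2} < \frac{2}{3}\log t$. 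The constant $16 = 6\cdot 4 \cdot \tfrac{2}{3}$ then falls out with no further bookkeeping.

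Your route is sound in principle and would succeed, but---as you already anticipated---reaching the specific constant $16$ via $\sum 1/n^2$ against an explicit $N(t+1)-N(t-1)$ bound is fiddly, and you would also need that zero-count uniformly for heights near $t\pm n$ (including small heights when $n$ is close to $t$), not just at $t$ itself. The paper's trick of feeding the sum back into $\Re\big(-\zeta'(2+it)/\zeta(2+it)\big)$ sidesteps all of that and is what makes the constant come out cleanly.
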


\begin{proof}

We can estimate the summand as follows:

\begin{equation}
\Big| \frac{1}{s-\rho} - \frac{1}{2+it-\rho} \Big| < \frac{3}{(t - \gamma )^2}.
\end{equation} 
We then have that

$$S_1 < \sum_{|t - \gamma | \geq 1} \frac{3}{(t-\gamma)^2} \leq \sum_{\rho} \frac{6}{1+(t-\gamma)^2}.$$
By letting $\sigma' = 2$, taking real parts in (\ref{zetazeroes}) and estimating as before one has

$$\sum_{\rho} \Re \Big( \frac{1}{s-\rho} + \frac{1}{\rho} \Big) <  \frac{2}{3} \log t$$
for $t > 50$. We then use the two simple facts

$$\Re \Big( \frac{1}{s-\rho} \Big) = \frac{2 - \beta}{(2-\beta)^2+(t-\gamma)^2} > \frac{1}{4+(t-\gamma)^2}$$
and

$$\Re \Big( \frac{1}{\rho} \Big) = \frac{\beta}{|\rho|^2} > 0$$
to get

$$\sum_{\rho} \frac{1}{4+(t-\gamma)^2} < \frac{2}{3} \log t.$$
Putting it all together we have

\begin{eqnarray*}
S_1 & < & \sum_{\rho} \frac{6}{1+(t-\gamma)^2}\\
&<& 24 \sum_{\rho} \frac{1}{4+(t-\gamma)^2} \\
& < & 16 \log t.
\end{eqnarray*}

\end{proof}

We now wish to estimate the remaining sum

$$S_2 = \sum_{|\gamma-t|<1} \Big( \frac{1}{s-\rho} - \frac{1}{2+it - \rho} \Big).$$
To do this, we first note that as $|2+it-\rho| > 1$ the contribution of the second term to the sum can be estimated trivially by 
 
$$N(t+1)-N(t-1)$$
where $N(T)$ denotes the number of zeroes of $\zeta(s)$ in the critical strip up to height $T$. We can use Corollary 1 of Trudgian \cite{trudgianargument} with $T_0 = 50$ to get

\begin{equation} \label{boundzerocount}
N(t+1) - N(t-1) < \log t
\end{equation}
so long as $t > 250000$. Using \textsc{Mathematica}, we can additionally verify this for $50<t \leq 250000$. At this point we have that

$$\frac{\zeta'(s)}{\zeta(s)} = \sum_{| t-\gamma|<1} \frac{1}{s-\rho} + O^*(19 \log t).$$

 Finally, we are concerned with bounding the magnitude of

$$S_2' = \sum_{|\gamma-t|<1} \frac{1}{s-\rho}. $$
Of course, the problem here is that $s$ might be close to a zero $\rho$, and this will give us trouble when we attempt to bound the line integrals. We search instead for a better value of $t$, say $t_0 \in (t-1,t+1)$, which will give a better bound. We will use this in the next section to shift our horizontal line of integration to a better height.

\begin{lem} \label{choicet}
Let $t> 50$. There exists $t_0 \in (t-1,t+1)$ such that

\begin{equation} \label{bound2}
\bigg| \sum_{|\gamma-t|<1} \frac{1}{(\sigma+it_0)-\rho} \bigg| < \log^2 t + \log t
\end{equation}

\end{lem}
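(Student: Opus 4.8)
The plan is to exploit the fact that the sum $S_2'$ has only boundedly many terms --- indeed at most $N(t+1)-N(t-1) < \log t$ of them by \eqref{boundzerocount} --- and that each term $1/((\sigma+it_0)-\rho)$ is large only when $t_0$ is close to the ordinate $\gamma$ of the corresponding zero. So I would argue that most choices of $t_0 \in (t-1,t+1)$ keep $\sigma+it_0$ away from every relevant $\rho$ simultaneously. Concretely, for a zero $\rho=\beta+i\gamma$ with $|\gamma-t|<1$ we have $|(\sigma+it_0)-\rho| \geq |t_0-\gamma|$, so $|1/((\sigma+it_0)-\rho)| \leq 1/|t_0-\gamma|$, and the sum is bounded by $\sum_{|\gamma-t|<1} 1/|t_0-\gamma|$, uniformly in $\sigma$.

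Next I would run a pigeonhole / measure argument on the interval $(t-1,t+1)$. Writing $M = N(t+1)-N(t-1)$ for the number of zeros in the window, there are at most $M$ "bad" subintervals of the form $(\gamma - \eta, \gamma+\eta)$ of total length $2M\eta$; choosing $\eta$ a little smaller than $1/M$ leaves a positive-measure set of admissible $t_0$, and for any such $t_0$ every $|t_0-\gamma| \geq \eta$. That gives a crude bound $\sum 1/|t_0-\gamma| \leq M/\eta \approx M^2$, i.e. of size roughly $(\log t)^2$. To land precisely inside $\log^2 t + \log t$ one should do the pigeonholing slightly more carefully: order the zeros in the window by proximity to a candidate $t_0$ and note that after removing bad neighbourhoods the distances $|t_0-\gamma_j|$ can be taken to be at least $j\eta'$ for the $j$-th closest zero, turning the bound into $(1/\eta')\sum_{j\le M} 1/j \ll (1/\eta')\log M$; with $M < \log t$ and $\eta'$ chosen as a suitable constant multiple of $1/M$, and using $t>50$ so that $\log t$ and $\log\log t$ are comfortably separated, the total comes in under $\log^2 t + \log t$. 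Alternatively one integrates: $\int_{t-1}^{t+1} \sum_{|\gamma-t|<1} 1/|t_0-\gamma| \, dt_0$ is dominated by $\sum_{|\gamma-t|<1}\int_{-1}^{1} dt_0/|t_0| $, which diverges, so a clean averaging needs the nonzero lower cutoff $\eta'$ anyway --- the discrete pigeonhole is the honest route.

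The step I expect to be the main obstacle is squeezing the constant: a naive pigeonhole gives something like $C(\log t)^2$ with $C>1$, whereas the statement demands the leading constant to be exactly $1$ (and an extra $+\log t$ of slack). Getting there requires combining (i) the sharp count $M < \log t$ from \eqref{boundzerocount}, (ii) a careful choice of the excluded neighbourhood radius so that the admissible set is nonempty yet the separation $|t_0-\gamma_j|$ grows linearly in the rank $j$, and (iii) the elementary estimate $\sum_{j\le M} 1/j \le \log M + 1$, all while tracking that $t>50$ forces $\log t > 3.9$ so the lower-order terms are genuinely absorbed. Once the separation-vs-count bookkeeping is done, the bound \eqref{bound2} follows, and crucially it is independent of $\sigma$, which is exactly what is needed in the next section to relocate the horizontal segment $C_2$ to the height $t_0$.
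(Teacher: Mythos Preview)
Your pigeonhole approach is exactly what the paper does, and in fact your ``crude'' version already delivers the stated bound with the correct constant --- there is no obstacle to squeeze. With $M$ zeros in the interval $(t-1,t+1)$ of length $2$, the zeros cut it into at most $M+1$ subintervals, so one of them has length at least $2/(M+1)$; taking $t_0$ at its midpoint gives $|t_0-\gamma|\ge 1/(M+1)$ for every relevant zero. Hence
\[
\sum_{|\gamma-t|<1}\frac{1}{|t_0-\gamma|}\;\le\; M\,(M+1)\;\le\;\log t\,(\log t+1)\;=\;\log^2 t+\log t,
\]
using $M\le\log t$ from \eqref{boundzerocount}. That is the paper's proof verbatim.

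Your more elaborate refinements are therefore unnecessary, and one of them is not quite right as stated: choosing $t_0$ outside the $\eta'$-neighbourhoods of all zeros does \emph{not} force the $j$-th closest zero to be at distance $\ge j\eta'$ (the zeros could cluster on one side of $t_0$, each just over $\eta'$ away). The averaging idea you also correctly discard. The moral is that the naive pigeonhole, with $\eta=1/(M+1)$, already hits the target exactly; no harmonic-sum or rank argument is needed.
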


\begin{proof}
By $(\ref{boundzerocount})$, there are at most $ \log t$ terms in $S_2'$. As such, we can partition the region into no more than $\log t +1$ zero-free regions.  We can see that there will always be such a region of height 

$$\frac{2}{ \log t + 1}$$
and choosing the midpoint, say $t_0$, of this region will guarantee a distance of 

$$\frac{1}{ \log t +1}$$
from any zero. As such, we have, letting $s = \sigma + i t_0$, that

$$\sum_{|\gamma-t|<1} \frac{1}{|s- \rho |} \leq \sum_{|\gamma-t|<1} \frac{1}{|\gamma - t|} \leq \sum_{|\gamma-t|<1} ( \log t +1) \leq   \log^2 t +  \log t.$$

\end{proof}

Finally, we can put the previous three lemmas together to get the following.

\begin{lem}
Let $\sigma > - 1, t > 50$. There exists $t_0 \in (t-1,t+1)$ such that

$$\Big| \frac{\zeta'(\sigma+it_0)}{\zeta(\sigma+it_0)} \Big| < \log^2 t+ 20\log t.$$
\end{lem}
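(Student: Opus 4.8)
The plan is to simply assemble the three preceding lemmas, which have already done all the real work. Recall that we have shown, for $s = \sigma + it$ with $\sigma > -1$ and $t > 50$, that
$$\frac{\zeta'(s)}{\zeta(s)} = \sum_{|t-\gamma| \geq 1} \Big( \frac{1}{s-\rho} - \frac{1}{2+it-\rho} \Big) + \sum_{|t-\gamma|<1} \Big( \frac{1}{s-\rho} - \frac{1}{2+it-\rho} \Big) + O^*(2\log t),$$
where the first sum is $O^*(16 \log t)$, and the contribution of the second term in the second sum is $O^*(\log t)$ via the bound $N(t+1)-N(t-1) < \log t$. The only term not yet controlled is $S_2' = \sum_{|t-\gamma|<1} \frac{1}{s-\rho}$, which is where the danger of $s$ being close to a zero lives.

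First I would invoke Lemma \ref{choicet}: fix $t > 50$, and let $t_0 \in (t-1,t+1)$ be the value it produces, so that with $s = \sigma + it_0$ we have $\big| \sum_{|\gamma-t|<1} \frac{1}{(\sigma+it_0)-\rho} \big| < \log^2 t + \log t$. Note that the estimates $16\log t$, the $\log t$ from $N(t+1)-N(t-1)$, and the $2\log t$ from Lemma \ref{zetaaszeroes} all depend only on $t$ (not on the precise height within the interval $(t-1,t+1)$, since they were derived uniformly), so they remain valid with $t_0$ in place of $t$ up to harmless adjustments absorbed into the constants. Then I would collect the error terms: $2\log t + 16\log t + \log t = 19\log t$ from the three "trivial" pieces, plus $\log^2 t + \log t$ from $S_2'$, giving a total bound of $\log^2 t + 20\log t$ as claimed. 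The existential quantifier on $t_0$ is inherited directly from Lemma \ref{choicet}.

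The main obstacle — really the only point requiring a moment's care rather than bookkeeping — is making sure that replacing $t$ by $t_0 \in (t-1,t+1)$ in the various $O^*$ bounds does not break them. Since $t > 50$ forces $t_0 > 49$, and the functions $\log t$ and $\log^2 t$ are slowly varying, the bounds stated for height $t$ continue to hold at height $t_0$ with the same (generous) constants; this is exactly why the earlier lemmas were phrased with room to spare. Once that is noted, the proof is a one-line addition of the error contributions, so I expect the write-up to be genuinely short.

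\begin{proof}
Combining Lemma \ref{zetaaszeroes} with the bound $S_1 = O^*(16\log t)$ and the estimate $N(t+1)-N(t-1) < \log t$ for $t > 50$ (which controls the second term of the sum over $|\gamma - t| < 1$), we obtained
$$\frac{\zeta'(s)}{\zeta(s)} = \sum_{|\gamma-t|<1} \frac{1}{s-\rho} + O^*(19\log t)$$
for $s = \sigma + it$ with $\sigma > -1$ and $t > 50$. Now apply Lemma \ref{choicet} to select $t_0 \in (t-1,t+1)$ with
$$\bigg| \sum_{|\gamma-t|<1} \frac{1}{(\sigma+it_0)-\rho} \bigg| < \log^2 t + \log t.$$
Since $t > 50$ implies $t_0 > 49$, the displayed identity above holds with $t_0$ in place of $t$, the $O^*(19\log t)$ term remaining valid at this nearby height. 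Hence, with $s = \sigma + it_0$,
$$\Big| \frac{\zeta'(\sigma+it_0)}{\zeta(\sigma+it_0)} \Big| < (\log^2 t + \log t) + 19\log t = \log^2 t + 20\log t,$$
as required.
\end{proof}
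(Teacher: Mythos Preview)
Your proof is correct and follows precisely the paper's own approach: the paper presents this lemma as an immediate consequence of ``putting the previous three lemmas together,'' and you have simply written that assembly out in full, arriving at the same decomposition $19\log t + (\log^2 t + \log t)$. The only thing you add beyond the paper is the brief remark about the harmless shift from $t$ to $t_0$, which the paper leaves implicit.
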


That is, if our contour is somewhat close to a zero, we can shift it slightly to a region where we have good bounds.

\subsection{Integral Estimates}

We now bound the error term $E(x,T,U)$ in $(\ref{bigerror})$, by estimating each integral trivially. Using Lemma \ref{bound1}, we have

\begin{eqnarray*}
|I_3| & = &\frac{1}{2\pi} \Big| \int_{-U-iT}^{-U+iT} - \frac{\zeta'(s)}{\zeta(s)} \frac{x^s}{s} ds \Big|\\ \\ 
& < &  \int_{-T}^{T} \frac{9 + \log \sqrt{U^2+T^2}}{2 \pi x^U T}dt \\ \\
&=&   \frac{18+2\log \sqrt{U^2+T^2}}{2 \pi x^U}.
\end{eqnarray*}
We save this, for soon we will combine our estimates and bound them in unison upon an appropriate choice for $U$. Consider now the problem of estimating $I_2$, and the issue that $T$ might be close to the ordinate of a zero. From Lemma \ref{choicet}, there exists some $T_0 \in (T-1,T+1)$ that we should integrate over instead. We thus aim to shift the line of integration from $C_2$ to 

$$C_2 ' = [-U+iT_0, c +iT_0].$$

It follows from Cauchy's theorem that

$$|I_2| < \sum_{T-1 < \rho < T+1} \Big| \frac{x^{\rho}}{\rho} \Big| + |I_5| + |I_6| +|I_7|+|I_8|$$
where

\begin{eqnarray*}
I_5 & = & \frac{1}{2 \pi i} \int_{-U+iT}^{-U+iT_0} - \frac{\zeta'(s)}{\zeta(s)} \frac{x^s}{s} ds \hspace{0.7in} I_6  =   \frac{1}{2 \pi i}  \int_{-U+iT_0}^{-1+iT_0} - \frac{\zeta'(s)}{\zeta(s)} \frac{x^s}{s} ds\\ \\
I_7 & = &  \frac{1}{2 \pi i}  \int_{-1+iT_0}^{c+iT_0} - \frac{\zeta'(s)}{\zeta(s)} \frac{x^s}{s} ds \hspace{0.8in} I_8  =   \frac{1}{2 \pi i}  \int_{c+iT}^{c+iT_0} - \frac{\zeta'(s)}{\zeta(s)} \frac{x^s}{s} ds.
\end{eqnarray*}
From $(\ref{boundzerocount})$, we can estimate the sum by

$$\sum_{T-1 < \Im \rho < T+1} \Big| \frac{x^{\rho}}{\rho} \Big| < \sum_{T-1 < \Im \rho < T+1} \frac{x}{T-1} < \frac{2 x \log T}{T-1}.$$

We can bound $I_5$ in the same way as $I_3$ to obtain

$$|I_5| <\frac{18+2\log \sqrt{U^2+(T+1)^2}}{2 \pi x^U T}.$$

Bounding $I_6$ is done using Lemma \ref{bound1}:

$$|I_6| < \frac{9+\log \sqrt{U^2+(T+1)^2}}{2 \pi x (T-1)}$$

We also use Lemma \ref{choicet} to get

$$|I_7| < \frac{e}{2 \pi (T-1)} (\log^2 (T+1) + \log (T+1)).$$

To get an upper bound for $I_8$, we notice that $\Re s = 1+ 1/\log x$ and so following the line of working which led to $(\ref{s1s5})$ gives

$$\Big| \frac{\zeta'(s)}{\zeta(s)} \Big| < \log x.$$ 
This is by the working involved in $(\ref{s1s5})$. Following through we get the bound

$$|I_8| < \frac{ex \log x}{\pi(T-1)}.$$

Now, throwing all of our estimates for the terms in $(\ref{bigerror})$ together, implanting the information that $T \leq x$, $x > e^{60}$ and letting $U$ be equal to the even integer closest to $x$ we obtain Theorem \ref{explicitformula}. It now remains to apply this result to the problem of primes between cubes.

\section{Primes between cubes}

\subsection{An indicator function for intervals}

We define the Chebyshev $\theta$-function as

$$\theta(x) = \sum_{p \leq x} \log p$$
and consider that 

$$\theta_{x,h} = \theta(x+h)-\theta(x) = \sum_{x < p \leq x+h} \log p$$
is positive if and only if there is at least one prime in the interval $(x,x+h]$. Many questions involving the primes can be phrased in terms of $\theta_{x,h}$. For example, the twin prime conjecture is equivalent to $\theta_{p,2}$ taking on a positive value infinitely often where $p$ is a prime. 

We are interested in setting $h = 3 x^{2/3}$ to tackle the problem of primes between cubes. For if 

$$\theta_{x,3x^{2/3}} > 0$$
for all $x > x_0$, then there is a prime in the interval

$$(x,x+3x^{2/3}]$$
for all $x > x_0$. If we then set $x = y^3$ then there is a prime in the interval

$$(y^3, y^3 + 3 y^{2}] \subset (y^3, (y+1)^3)$$
for all $y > y_0 = x_0^{1/3}$. It is our intention to determine explicitly a value for $x_0$ and thus $y_0$. 

We introduce Chebyshev's $\psi$-function, given by

$$\psi(x) = \sum_{p^r \leq x} \log p.$$ 
Let $h$ be some positive real number. Substituting $x+h$ and then $x$ into Theorem \ref{explicitformula} and taking the difference, we find that: 

\begin{equation} \label{psiinterval}
\psi(x+h) - \psi(x) > h - \bigg| \sum_{|\gamma| < T} \frac{(x+h)^{\rho} - x^{\rho}}{\rho}\bigg| - \frac{4 (x+h) \log^2 (x+h)}{T}.
\end{equation}

Whilst the above will tell us information about prime powers, we are actually interested in primes. We thus require the following lemma; a combination of Proposition 3.1 and 3.2 of Dusart \cite{dusart}.

\begin{lem} 
Let $x \geq 121$. Then

$$0.9999  x^{1/2} < \psi(x) - \theta(x) < 1.00007 x^{1/2} + 1.78 x^{1/3}.$$ 
\end{lem}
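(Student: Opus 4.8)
The plan is to write $\psi(x)-\theta(x)$ as a sum of $\theta$ evaluated at roots of $x$ and then feed in Dusart's explicit estimates for $\theta$. From the definitions,
$$\psi(x) = \sum_{p^r \le x}\log p = \sum_{r\ge 1}\ \sum_{p\le x^{1/r}}\log p = \sum_{r\ge 1}\theta\big(x^{1/r}\big),$$
and this sum is finite since $\theta(x^{1/r})=0$ once $x^{1/r}<2$, i.e.\ for $r>\log x/\log 2$. Subtracting the $r=1$ term,
$$\psi(x)-\theta(x) = \theta\big(x^{1/2}\big) + \theta\big(x^{1/3}\big) + \sum_{r\ge 4}\theta\big(x^{1/r}\big).$$

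For the lower bound I would discard every term but the first, which is legitimate since all $\theta$-values are non-negative, giving $\psi(x)-\theta(x)\ge \theta(x^{1/2})$. The hypothesis $x\ge 121$ is precisely what makes $x^{1/2}\ge 11$, placing us in the range where Dusart's explicit lower bound $\theta(y)>0.9999\,y$ is available; this yields $\psi(x)-\theta(x)>0.9999\,x^{1/2}$. For the upper bound I would estimate the leading term by Dusart's companion inequality $\theta(x^{1/2})<1.00007\,x^{1/2}$ and absorb everything else into $1.78\,x^{1/3}$. Here a crude Chebyshev-type bound $\theta(y)\le c\,y$ with an explicit $c$ from Dusart handles the $r=3$ term ($\le c\,x^{1/3}$), while for $r\ge 4$ one has $x^{1/r}\le x^{1/4}$, so the tail is at most $c\,(\log x/\log 2)\,x^{1/4}$; since $x^{1/4}\log x = o(x^{1/3})$, for $x$ past a modest threshold the whole tail is bounded by a small multiple of $x^{1/3}$, and on the remaining finite initial segment the inequality is confirmed by direct computation. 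In practice this is exactly the content of Propositions 3.1 and 3.2 of Dusart \cite{dusart}, which one may simply quote.

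The only delicate point is the bookkeeping for the upper-bound tail near $x=121$: there $x^{1/3},x^{1/4},x^{1/5}$ are all of comparable (small) size, so $\sum_{r\ge 3}\theta(x^{1/r})$ is genuinely of the same order as $x^{1/3}$ rather than negligible, and one must check numerically that the accumulated constant does not exceed $1.78$ on that initial range. Everything else is a routine substitution of known explicit bounds for $\theta$.
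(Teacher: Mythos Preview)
The paper does not prove this lemma at all: it simply quotes Propositions~3.1 and~3.2 of Dusart, which are already stated as bounds on $\psi(x)-\theta(x)$ directly. Your final sentence acknowledges this, so in that sense your proposal matches the paper.

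However, the re-derivation you sketch before that has a genuine gap in the lower bound. You write that discarding all terms but $\theta(x^{1/2})$ and invoking ``Dusart's explicit lower bound $\theta(y)>0.9999\,y$'' for $y\ge 11$ gives the result. No such bound exists: at $y=11$ one has $\theta(11)=\log 2310\approx 7.74$, far below $0.9999\cdot 11\approx 11$. Indeed $\theta(y)<y$ for many small $y$, and bounds of the shape $\theta(y)>0.9999\,y$ only kick in for $y$ in the thousands. At $x=121$ the inequality $\psi(x)-\theta(x)>0.9999\,x^{1/2}$ holds only because the discarded tail $\theta(x^{1/3})+\theta(x^{1/4})+\cdots$ contributes roughly $5$ of the required $11$; your argument throws away exactly the terms that make the lower bound true near the threshold. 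So the decomposition is fine, but the step ``discard every term but the first'' is not, and the claimed $\theta$-bound is false. Dusart's own proof of the $\psi-\theta$ lower bound keeps more terms (or handles small $x$ by direct computation), and that is why the paper just cites him rather than re-deriving it.
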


It follows from (\ref{psi}) and the above lemma that

\begin{eqnarray} \label{big}
\theta_{x,h} & > & h - \bigg| \sum_{|\gamma| < T} \frac{(x+h)^{\rho} - x^{\rho}}{\rho}\bigg| - \frac{4 (x+h) \log^2 (x+h)}{T}  \nonumber \\ \nonumber \\
&-& 1.00007 (x+h)^{1/2}-1.78 (x+h)^{1/3} + 0.9999 x^{1/2}.
\end{eqnarray}
Given that we are interested in $h = 3 x^{2/3}$, it remains to choose $T=T(x)$ and find $x_0$ such that $\theta_{x,h}$ is positive for all $x>x_0$. 

\subsection{Estimating the sum over the zeroes}

In consideration of (\ref{big}), we let 

$$S = \bigg| \sum_{|\gamma| < T} \frac{(x+h)^{\rho} - x^{\rho}}{\rho}\bigg|.$$
We then have that

\begin{eqnarray*}
S & = & \bigg| \sum_{|\gamma| < T} \int_{x}^{x+h} t^{\rho-1} \bigg| \leq  \sum_{|\gamma| < T} \int_{x}^{x+h} t^{\beta-1}  \leq h \sum_{| \gamma| < T} x^{\beta-1}.
\end{eqnarray*}
From the identity

\begin{eqnarray*}
\sum_{|\gamma| < T} ( x^{\beta-1} - x^{-1}) & = & \sum_{|\gamma| < T} \int_0^{\beta} x^{\sigma - 1} \log x d \sigma \\
& = & \int_0^1 \sum_{ \beta> \sigma, |\gamma| < T } x^{\sigma-1} \log x d \sigma,
\end{eqnarray*}
it follows that

\begin{equation} \label{summypoos}
\sum_{|\gamma| < T} x^{\beta  -1} = 2 x^{-1} N(T) + 2 \int_0^{1} N(\sigma,T) x^{\sigma-1} \log x d \sigma,
\end{equation}

where $N(\sigma, T)$ denotes the number of zeroes $\rho$ of the Riemann zeta-function with $0<\Im \rho < T$ and $\Re(\rho) > \sigma$.

We can estimate the above sum, and thus $S$, with the assistance of some explicit bounds. Firstly note, that by Corollary 1 of Trudgian \cite{trudgianargument} we have that

$$N(T) < \frac{T \log T}{2 \pi} $$
for all $T>15$, say. Explicit estimates for $N(\sigma,T)$ are rare, though have come to light recently through the likes of Kadiri \cite{kadiri} and Ramar\'{e} \cite{ramare}, who have produced zero-density estimates of rather different shape to each other. Ramar\'{e}'s estimate, which is an explicit and asymptotically better version of Ingham's \cite{ingham} original density estimate, is required for the problem of primes between cubes. We give the result here, which is a corollary of Theorem 1.1 of \cite{ramare}.

\begin{lem} \label{ramaredensity}
Let $T \geq 2000$ and $\sigma \geq 0.52$. Then

$$N(\sigma,T) \leq 9.7 (3T)^{8(1-\sigma)/3} \log^{5-2\sigma} T + 103 \log^2 T.$$
\end{lem}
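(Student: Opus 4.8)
The strategy is to obtain the inequality directly from Ramar\'{e}'s Theorem 1.1 in \cite{ramare} by specialisation and simplification; no new analysis is needed, only careful bookkeeping. Ramar\'{e}'s theorem bounds the zero-counting function for Dirichlet $L$-functions by an expression of exactly the shape we want --- a main term consisting of a power of $qT$ with exponent linear in $1-\sigma$, multiplied by a power of $\log(qT)$ with exponent linear in $\sigma$, plus a secondary term comparable to $\log^2 T$ --- valid for $\sigma$ in an explicit range containing $[0.52,1]$ and for $T$ above an explicit threshold. First I would set $q=1$ so that the statement concerns only the nontrivial zeros of $\zeta(s)$, and reconcile Ramar\'{e}'s normalisation of the zero count with the one used here (zeros with $0<\Im\rho<T$ and $\Re\rho>\sigma$); any discrepancy --- a factor of $2$, or $>$ versus $\geq$ --- is harmless and gets swept into the constants. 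Reading off Ramar\'{e}'s numerical values then gives $N(\sigma,T)\le c\,(3T)^{8(1-\sigma)/3}\log^{5-2\sigma}T$ plus lower-order terms bounded by fixed multiples of $\log^2 T$, $\log T$, a negative power of $T$, and a constant; the factor $3$ inside the power comes from a $2\pi$-type normalisation in \cite{ramare}.

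Next I would consolidate the error using $T\ge 2000$, hence $\log T\ge\log 2000>7.6$: each of $\log T$, $1$, and any negative power of $T$ is at most a fixed multiple of $\log^2 T$, so the entire collection of lower-order terms can be bounded by a single multiple of $\log^2 T$, and one checks this total does not exceed $103\log^2 T$ over the range. This is a finite computation once the constants of \cite{ramare} are in hand.

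The one delicate point is fixing the leading constant at $9.7$. Both $8(1-\sigma)/3$ and $5-2\sigma$ are decreasing in $\sigma$ on $[0.52,1]$, so with a fixed coefficient the main term is largest at $\sigma=0.52$ for any $T>1$; and provided Ramar\'{e}'s own exponents are no larger than ours, for fixed $\sigma$ the ratio of his main term to $(3T)^{8(1-\sigma)/3}\log^{5-2\sigma}T$ is nonincreasing in $T$, placing the worst case at $(\sigma,T)=(0.52,2000)$. Evaluating the raw constant there, and likewise verifying that replacing Ramar\'{e}'s exact $\sigma$-dependent logarithmic exponent by $5-2\sigma$ loses nothing once $\log T\ge\log 2000$ is used, should confirm it stays below $9.7$. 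I expect this endpoint-and-monotonicity check --- together with confirming that the assembled lower-order terms really do fit under $103\log^2 T$ uniformly for $T\ge 2000$ --- to be the only genuine work; the remainder is transcription.
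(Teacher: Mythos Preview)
Your approach is correct and matches the paper's: the paper simply states this lemma as a corollary of Theorem 1.1 of \cite{ramare} without giving any further details, so your plan to specialise $q=1$ and consolidate lower-order terms under $103\log^2 T$ is exactly the intended derivation. The paper offers no proof beyond the citation, so your bookkeeping outline is, if anything, more detailed than what appears there.
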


The following zero-free region, given by Ford, will also be required.

\begin{lem} \label{fordregion}
Let $T \geq 3$. Then there are no zeroes of $\zeta(s)$ in the region given by $\sigma \geq 1 - \nu(T)$ where

$$\nu(T) = \frac{1}{57.54 \log^{2/3} T (\log \log T)^{1/3}}.$$

\end{lem}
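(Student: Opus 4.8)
The plan here is short: this lemma is simply a restatement of the explicit Vinogradov--Korobov zero-free region proved by Ford \cite{ford}, so the ``proof'' amounts to a citation together with a check that the shape of $\nu(T)$ and the constant $57.54$ agree with what Ford records. Ford shows that $\zeta(\sigma+it) \neq 0$ whenever $\sigma \geq 1 - 1/\big(57.54\,(\log|t|)^{2/3}(\log\log|t|)^{1/3}\big)$ for $|t|$ sufficiently large, with $57.54$ being precisely the constant appearing in the statement; the hypothesis $T \geq 3$ is harmless, since it guarantees $\log\log T > 0$ and in the bounded range that remains the classical de la Vall\'ee Poussin zero-free region already keeps zeros away from $\sigma = 1$.

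If one wanted instead to reprove the estimate from scratch, I would follow the standard two-stage route. First, obtain an explicit bound of the form $|\zeta(\sigma+it)| \leq \exp\!\big(A (\log t)^{2/3}(\log\log t)^{1/3}\big)$ in a narrow strip just to the left of $\sigma = 1$; this comes from the approximate functional equation combined with an explicit version of Vinogradov's bound for the exponential sums $\sum_{n \leq N} n^{-it}$, which in turn rests on an explicit form of Vinogradov's mean value theorem. Second, insert this growth bound into the classical positivity argument driven by $3 + 4\cos\phi + \cos 2\phi \geq 0$: writing the logarithmic derivative of $\zeta$ as a sum over zeros and applying the inequality, one forces any zero $\beta + i\gamma$ to satisfy $1 - \beta \geq c/\big(A'(\log\gamma)^{2/3}(\log\log\gamma)^{1/3}\big)$. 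The numerical constant then emerges by optimising the free parameters --- the length $N$ of the exponential sum, the degree appearing in Vinogradov's integral, and the width of the strip --- and carrying every constant through that optimisation.

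The genuinely hard part of such a reconstruction is the first stage: producing a clean, small constant $A$ in the bound for $|\zeta|$ near $\sigma = 1$ requires an explicit mean value estimate and a delicate balancing of parameters, and this is exactly the technical heart of \cite{ford}. Since that work is already done and published, I do not reproduce it; I simply quote \cite{ford} in the form stated here, which is the shape convenient for bounding $N(\sigma,T)$ and the sum over zeros in the next section.
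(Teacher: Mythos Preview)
Your proposal is correct and matches the paper's approach exactly: the paper does not prove this lemma at all but simply states it as ``given by Ford'' and cites \cite{ford}, so the proof amounts to a citation, just as you say. Your additional sketch of the Vinogradov--Korobov method behind Ford's constant is accurate but goes beyond what the paper itself provides.
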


It is useful to carry out the bulk of the calculations with $A$ in place of the constant 9.7 in Lemma \ref{ramaredensity} and $c$ in place of the $57.54$ in Lemma \ref{fordregion}. Doing so allows us later on to see the importance of improvements of these constants, and thus gives direction to future efforts on this problem.

We split the integral in (\ref{summypoos}) into two parts, one over the interval $0 \leq \sigma \leq 5/8$, where $N(\sigma,T)$ may as well be bounded by $N(T)$, and another over $5/8 \leq \sigma \leq 1-\nu(T)$. By inserting the relevant estimates, we get

\begin{eqnarray} \label{crackers}
\sum_{|\gamma| < T} x^{\beta-1} & < & 2 x^{-1} N(T) + 2 x^{-1} N(T) \log x \int_0^{5/8} x^{\sigma} dx \nonumber  \\
& + & 2 A x^{-1} (3T)^{8/3} \log x \log^5 T \int_{5/8}^{1-\nu(T)} \bigg( \frac{x}{(3T)^{8/3} \log^2 T} \bigg)^{\sigma} d \sigma  \nonumber \\
& + & 103 x^{-1} \log x \log^2 T \int_{5/8}^{1-\nu(T)} x^{\sigma} d \sigma.
\end{eqnarray}

The working out is routine, yet tedious. We give the qualitative details to the extent that the reader can follow the process. We introduce the parameter $k \in (\frac{2}{3},1)$, which will play a part in the relationship between $T$ and $x$. The reasons for the range of values of $k$ will become clear soon. Now let $T=T(x)$ be the solution to the equation

$$\frac{x}{(3T)^{8/3} \log^2 T} = \exp( \log^k x).$$

Upon performing the integration in (\ref{crackers}), we directly substitute in the above relationship, along with the bound for $N(T)$ and the fact that $\log T < (3/8) \log x$, to get

\begin{eqnarray} \label{dong}
\sum_{|\gamma| < T} x^{\beta-1} & < & \frac{ e^{-\frac{3}{8} \log^k x} \log^{1/4} x}{3^{3/4} 8^{1/4} \pi}+ \frac{27A}{256} \log^{4-k} x ( e^{-\nu(T) \log^k x}- e^{-(3/8) \log^k x}) \nonumber \\ 
& + & \frac{927 A}{32} \log^2 x (e^{- \nu(T) \log x} - x^{-3/8}).
\end{eqnarray}
There is some cancellation in the above. First, we need to estimate one of the exponential terms involving $\nu(T)$. We have that

\begin{eqnarray*}
e^{-v(T) \log x} & = & \exp\Big( -  \frac{\log x}{c \log^{2/3} T (\log \log T)^{1/3}} \Big) \\
& < &  \exp\Big( -  \frac{4}{3^{2/3} c} \Big( \frac{\log x}{  \log \log x} \Big)^{1/3} \Big).
\end{eqnarray*}
Now, upon expansion of (\ref{dong}) and using the above we can notice that

$$- \frac{27 A}{256} (\log x)^{4-k} e^{- (3/8) \log^k x}+ \frac{927 A}{32} \log^2 x (e^{- \nu(T) \log x} - x^{-3/8}) < 0.$$
This is clear if one looks at the dominant terms. It follows that

\begin{eqnarray} 
\sum_{|\gamma| < T} x^{\beta-1} & < & \frac{ e^{-\frac{3}{8} \log^k x} \log^{1/4} x}{3^{3/4} 8^{1/4} \pi}+ \frac{27A}{256} (\log x)^{4-k} e^{-\nu(T) \log^k x}.
\end{eqnarray}
The remaining exponential term involving $\nu(T)$ is dealt with as before to get

\begin{eqnarray*} 
S \leq h \sum_{|\gamma| < T} x^{\beta-1} & < & \frac{ h e^{-\frac{3}{8} \log^k x} \log^{1/4} x}{3^{3/4} 8^{1/4} \pi}+ \frac{27Ah}{256} (\log x)^{4-k} \exp\Big(- \frac{4}{3^{2/3}c}  \frac{\log^{k-2/3} x}{(\log \log x)^{1/3}} \Big).
\end{eqnarray*}

\subsection{Estimates for inequalities}

It is now clear that we may write $(\ref{big})$ as

$$\theta_{x,h} > h - f(x,h,k,A,c) - g(x,h,k)-E(x,h,k)$$
where

\begin{eqnarray*}
f(x,h,k,A,c) & = & \frac{27 A h}{256} (\log x)^{4-k} \exp\Big( - \frac{4}{3^{2/3} c} \frac{\log^{k-2/3} x}{(\log \log x)^{1/3}}\Big),\\ \\
g(x,h,k) & = & 12 \Big(\frac{3}{8} \Big)^{3/4} \frac{(x+h) \log^{11/4} (x+h)}{x^{3/8}} \exp(\frac{3}{8} \log^k x),\\ \\
 E(x,h,k) & = & - \frac{ h (\log x)^{1/4} \exp(-\frac{3}{8} \log^k x)}{6^{3/4} \pi}-1.00007(x+h)^{1/2} \\\
 &-&1.78 (x+h)^{1/3}+0.9999 x^{1/2}.
\end{eqnarray*}

First, we look to bound the error. Noting that $x > e^{60}$, we set $h = 3x^{2/3}$ and use the fact that $k=2/3$ will give us the worst possible error to get

$$\frac{E(x,3 x^{2/3},2/3)}{3x^{2/3}} < 10^{-3}.$$
Thus, one can show that positivity holds if the following two inequalities are simultaneously satisfied:

\begin{enumerate}
\item $f(x,h,k,A,c) < \frac{1}{2} (1-10^{-3}) h$,
\item $g(x,h,k) < \frac{1}{2} (1-10^{-3}) h.$
\end{enumerate}
This splitting simplifies our working greatly whilst perturbing the solution negligibly. To be convinced of this, one could consider the right hand side of each of the above inequalities as being equal to $h$, in some better-than-possible scenario. It turns out that the improvements would hardly be noticeable. We will, however, mention at the end of this paper some direction for future attempts at improving the work on primes between cubes.

Now, in the first inequality, we take the logarithm of both sides and set $x = e^y$ to get

\begin{equation}\label{1}
\log\Big(\frac{27 A}{256}\Big) + (4-k) \log y - \frac{4}{3^{2/3} c} \frac{y^{k-2/3}}{\log^{1/3} y} < \log (\frac{1}{2} (1-10^{-3}))
\end{equation}
This is easy to solve using \textsc{Mathematica}, given knowledge of $A$, $k$ and $c$. There are some notes to make here first. We can see that $A$, the constant in front of Ramar\'{e}'s zero-density estimate has little contribution, for being in the argument of the logarithm. However, $c$ plays a much larger part from where it is positioned. We can also see the reason for $k > 2/3$, in that it guarantees a solution.

We deal with the second inequality in the same way, but first we notice that

$$\frac{g(x,h,k)}{h} < \frac{2 \log^{11/4} x}{x^{1/24}} \exp( \frac{3}{8} \log^k x).$$
This is obtained using the main result of Ramar\'{e} and Saouter \cite{ramaresaouter} to bound

$$x+h < \frac{x}{1-\Delta^{-1}}$$
where $\Delta = 28 314 000$ as given in their paper. Thus, using the same approach as before we get

\begin{equation} \label{2}
\frac{11}{4} \log y + \frac{3}{8} y^k-\frac{1}{24} y < \log( \frac{1}{4} (1-10^{-3})).
\end{equation}

We notice here our reason for having $k<1$. One can also see the reason for leaving $k$ free to vary in $(2/3,1)$. There should be an optimal value of $k$, where the solution range of the above two inequalities are equal and their intersection is minimised. 

No rigorous analysis needs to be conducted; we set $A=9.7$, $c=57.54$ and use the \texttt{Manipulate} function of \textsc{Mathematica} to ``hunt'' for a good value of $k$. It turns out that upon choosing $k=0.9359$, we have that both inequalities are satisfied for $y>8 \times 10^{14}$, or $x^{1/3} > \exp(\exp(33.217))$, which proves our main result.

\subsection{Notes for future improvements}

Using the explicit methods of this paper, better estimates for zero-densities, zero-free regions and the error term of Landau's explicit formula could effectively be implemented to furnish a new estimate. The following preemptive discussion might be useful for one looking to do such a thing.

Let's consider first improving the zero-density estimate given by Ramar\'{e}. Say, for the sake of discussion, one could obtain a value of $A=10^{-4}$. Then we would obtain our result instead with $n \geq \exp(\exp(32.7))$, an improvement which would probably not be worth the efforts required to obtain such a value of $A$. 

Ramar\'{e} has communicated that one could use the Brun-Titchmarsh theorem to remove a power from the logarithm in the error term of (\ref{explicitformula}). This does not seem to improve the overall result; a shortcoming, perhaps, of the numerical methods used by the author.  

There are other parameters where one might wish to direct future efforts. In Ramar\'{e}'s zero density estimate, one might consider the power $5-2\sigma$ of the logarithm to be $L-2\sigma$. The main difference in our working would be $(L-1-k)$ in place of $(4-k)$ in the reduced form of our second inequality. The following table summarises the improvements which would follow; a prime between $n^3$ and $(n+1)^3$ for all $n \geq n_0$.

\begin{center}
  \begin{tabular}{ | c | c | }
    \hline
    $L$  & $\log \log n_0$ \\ \hline \hline
    5 & 33.217 \\ \hline
    4 & 31.8 \\ \hline
    3 & 29.8 \\ \hline
    2 & 22.19 \\ \hline
    
  \end{tabular}
\end{center}

Turning now to the error term of Theorem \ref{explicitformula} one could also consider a smaller constant in place of $2$. This constant, however, would appear in the logarithm of the right hand side of (\ref{2}), and thus make little difference. 

Wolke has derived the explicit formula with an error term which is

$$O\Big(\frac{x \log x}{T \log (x/T)}\Big) = O\Big( \frac{x}{T} \Big)$$
for the choice of $T(x)$ used in this paper. One may propose all sorts of ``good'' explicit constants for the above error term and try them via the methods of this paper, but there will be no major improvements.

Changes in the constant $c$ are more effective, though seemingly much more difficult to obtain. A value of $c=40$ would yield only $n \geq \exp(\exp(31.88))$, and $c=20$ would give $n \geq \exp(\exp(29.6))$. The removal of the $(\log \log T)^{1/3}$ would give a similar result.

Thus one expects a major result, or perhaps many minor ones, to make significant progress on this problem.

\subsection{Higher powers}

In lieu of a complete result on the problem of primes between cubes, we consider instead primes between $m$th powers, where $m$ is some positive integer. Appropriately, we choose $h = m x^{1-1/m}$, and we are able to prove the following result.

\begin{thm} \label{mpowers}
Let $m \geq 4.971 \times 10^9$. Then there is a prime between $n^m$ and $(n+1)^m$ for all $n \geq 1$.
\end{thm}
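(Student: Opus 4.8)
The plan is to run exactly the same machinery as in the proof of Theorem \ref{maintheorem}, but with the interval length $h = m x^{1-1/m}$ in place of $h = 3x^{2/3}$, and to track how the relevant inequalities degrade as $m$ grows. The starting point is inequality (\ref{big}), which holds verbatim for any $h>0$; setting $h = mx^{1-1/m}$ there and dividing through by $h$, it suffices to show that $\theta_{x,h} > 0$ for $x = n^m$ with $n \geq 1$, i.e. for $x \geq 1$ in the relevant range — so the crucial point is that for large $m$ the threshold $x_0(m)$ below which positivity might fail must itself drop below $2^m$, so that the (verified, via Ramar\'{e}--Saouter or direct computation) small cases cover everything. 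Equivalently, one wants $x_0(m)^{1/m} \to $ something $\leq 2$ (in fact one wants to beat $n=1$, so $x_0(m) < 2^m$ eventually with room to spare).

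First I would re-derive the analogue of the bound on $S = \big|\sum_{|\gamma|<T}((x+h)^\rho - x^\rho)/\rho\big|$: the estimate $S \leq h\sum_{|\gamma|<T} x^{\beta-1}$ is unchanged, and choosing $T = T(x)$ by the same relation $x/((3T)^{8/3}\log^2 T) = \exp(\log^k x)$ with $k \in (2/3,1)$, the bound on $\sum_{|\gamma|<T}x^{\beta-1}$ from the previous subsection carries over with no change since it does not involve $h$ at all. So one again reduces, after absorbing the now-dominant $-1.00007(x+h)^{1/2}$-type error terms (which are $o(h)$ since $h = mx^{1-1/m} \gg x^{1/2}$ once $x$ is large relative to $m$ — this needs a mild lower bound on $x$ in terms of $m$), to the two inequalities
\begin{enumerate}
\item $f(x,h,k,A,c) < \tfrac12(1-\epsilon)h$,
\item $g(x,h,k) < \tfrac12(1-\epsilon)h$,
\end{enumerate}
where $f$ and $g$ are as defined in Section 3, with $\epsilon$ a small slack for the errors. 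Dividing by $h$ and writing $x = e^y$, inequality 1 becomes $\log(27A/256) + (4-k)\log y - \frac{4}{3^{2/3}c}\,y^{k-2/3}/\log^{1/3}y < \log(\tfrac12(1-\epsilon))$, which is \emph{identical} to (\ref{1}) — it has no $m$-dependence at all, so it holds for $y$ above the same absolute constant $\approx 8\times 10^{14}$ regardless of $m$. The $m$-dependence lives entirely in inequality 2: there $g(x,h,k)/h$ picks up a factor like $(x+h)/(x\cdot x^{-1/m})$-ish, i.e. the term $\frac{1}{24}y$ in (\ref{2}) gets replaced by $\frac{1}{m}y$ (the contribution of $x^{1-1/m}$ versus $x^{3/8}$ in the denominator of $g$), giving roughly
\[
\tfrac{11}{4}\log y + \tfrac{3}{8}y^k - \tfrac{1}{m}y < \log(\tfrac14(1-\epsilon)).
\]

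The main obstacle is that for $k \in (2/3,1)$ the term $\tfrac38 y^k$ grows polynomially in $y$ while $-\tfrac1m y$ is linear, so inequality 2 \emph{is} eventually satisfied for every fixed $m$, but the crossover point $y_0(m)$ where $\tfrac1m y$ overtakes $\tfrac38 y^k$ is of order $y_0(m) \asymp m^{1/(1-k)}$, hence $x_0(m) = e^{y_0(m)} = \exp(\Theta(m^{1/(1-k)}))$. For this to be below $2^m = e^{m\log 2}$ one needs $m^{1/(1-k)} \lesssim m$, i.e. one is forced to take $k$ \emph{smaller}, pushing toward $k = 2/3$; but $k = 2/3$ is exactly where inequality 1 loses its solution (as the paper notes, $k>2/3$ is what guarantees a solution to (\ref{1})). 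So the real work is a careful balancing: choose $k$ as close to $2/3$ as the numerics of inequality 1 allow, then verify that with that $k$ the threshold $y_0(m)$ from inequality 2 satisfies $y_0(m) < m\log 2$ precisely when $m \geq 4.971\times 10^9$. I expect this to come down to a \textsc{Mathematica} computation: fix (or let \texttt{Manipulate} hunt for) an appropriate $k$ slightly above $2/3$, intersect the solution sets of inequalities 1 and 2 as functions of $y$ and $m$, and read off the smallest $m$ for which the intersection is nonempty below $e^{m\log 2}$. The small remaining cases $x < x_0(m)$ — now a genuinely small range — are handled either by the Ramar\'{e}--Saouter table (as was done for $x \leq e^{60}$ in the cubes case) or, for $n=1$ itself, by noting there is trivially a prime between $1$ and $2^m$. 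The delicate quantitative point is confirming that the error-absorption step ($E/h < \epsilon$) is still valid throughout: since $h = mx^{1-1/m}$ and the errors are $O(x^{1/2})$, one needs $x^{1/2-1/m} \gg m$, which holds comfortably once $y \gtrsim$ a small multiple of $\log m$, well inside the regime where inequalities 1 and 2 operate.
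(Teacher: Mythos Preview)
There are two genuine gaps. First, your modified inequality 2 is miscomputed. In the cubes case the linear coefficient $\tfrac{1}{24}$ arises as $\tfrac{3}{8}-\tfrac{1}{3}$ (the $\tfrac{3}{8}$ coming from $T\asymp x^{3/8}$ in the explicit-formula error, the $\tfrac{1}{3}$ from $h=3x^{2/3}$); for general $m$ the coefficient is therefore $\tfrac{3}{8}-\tfrac{1}{m}$, not $\tfrac{1}{m}$. The paper's version is
\[
\tfrac{11}{4}\log y \;-\;\Big(\tfrac{3}{8}-\tfrac{1}{m}\Big)y\;+\;\tfrac{3}{8}\,y^{k}\;<\;\log\!\Big(\tfrac{m}{12}(1-10^{-3})\Big),
\]
so as $m\to\infty$ the linear term tends to $-\tfrac{3}{8}y$, which \emph{dominates} $\tfrac{3}{8}y^{k}$ for every $k<1$. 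Hence inequality 2 becomes easier, not harder, as $m$ grows, and the optimal $k$ is pushed toward $1$ (the paper uses $k=0.9998$ at $m=1000$), not toward $2/3$ as you argue. Your crossover analysis $y_0(m)\asymp m^{1/(1-k)}$ is therefore built on the wrong inequality and points in the wrong direction.

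Second, and more seriously, even after correcting this the explicit-formula machinery alone cannot close the argument: inequality~(\ref{1}) is $m$-independent and forces $y_0$ to be at least of order $10^{11}$, so $x_0=e^{y_0}$ is astronomically beyond the reach of the Ramar\'{e}--Saouter table (which stops around $e^{60}$). The paper does \emph{not} attempt to push $x_0$ below $2^m$ directly. Instead it fixes $m=1000$, $k=0.9998$ to obtain a prime between $n^{m}$ and $(n+1)^{m}$ for all $n\ge \exp\!\big(1000\,e^{19.807}/m\big)$ when $m\ge 1000$, and then covers the small-$n$ range with a completely separate explicit short-interval result of Trudgian: for $x\ge 2898239$ there is a prime in $[x,\,x(1+1/(111\log^2 x))]$, which after setting $x=n^m$ handles all $n$ with $n/\log^2 n<111\,m^3$. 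Intersecting these two ranges (the first decreasing in $m$, the second increasing) and solving yields $m\approx 4.971\times 10^{9}$; Bertrand's postulate disposes of $n=1$. Your proposal is missing this second ingredient entirely, and without it no choice of $k$ will produce the stated bound.
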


The result seems absurd on a first glance as the value of $m$ is quite large. We shall leave it to others to attempt to bring the value down. 

We now prove the above theorem as follows; for our choice of $h$, it follows that inequality (\ref{2}) becomes

\begin{equation} \label{new2}
\frac{11}{4} \log y - \Big( \frac{3}{8} - \frac{1}{m} \Big) y+\frac{3}{8} y^k < \log\Big( \frac{m}{12} (1-10^{-3})\Big)
\end{equation}
whereas inequality (\ref{1}) remains the same. As before, we can, for some given $m$, choose $k$ and find $n_0$ such that there is a prime between $n^m$ and $(n+1)^m$ for all $n \geq n_0$ by solving both inequalities. Some results are given in the following table.

\begin{center}
  \begin{tabular}{ | c | c | c | }
    \hline
    $m$  & $k$ & $\log \log n_0$ \\ \hline \hline
    4 & 0.9635 & 29.240 \\ \hline
    5 & 0.9741 & 27.820 \\ \hline
    6 & 0.9796 & 27.230 \\ \hline
    7 & 0.983 & 26.427 \\ \hline
    1000 & 0.9998 & 19.807 \\ \hline
  \end{tabular}
\end{center}

One can see that this method has its limitations, even in the case of higher powers. Nonetheless, we have that there is a prime in $(n^{1000}, (n+1)^{1000})$ for all $n  \geq \exp(\exp(19.807))$. It follows that, for $m \geq 1000$, there is a prime between $n^m$ and $(n+1)^m$ for all 

\begin{equation} \label{lower}
n \geq \exp\bigg( \frac{1000 \exp(19.807)}{m}\bigg).
\end{equation}
We could choose $m = 1000 \exp(19.807) \approx 4 \times 10^{11}$ to get primes between $n^m$ and $(n+1)^m$ for all $n \geq e$. Betrand's postulate improves this to all $n \geq 1$.

However, we can use Corollary 2 of Trudgian \cite{trudgianpomerance} to improve on this value of $m$. This states that for all $x \geq 2898239$ there exists a prime in the interval 

$$\bigg[ x,x\bigg(1+\frac{1}{111\log^2 x}\bigg) \bigg].$$
If we set $x = n^m$, we might ask when the above interval falls into $[n^m,n^m+m n^{m-1}]$. One can rearrange the inequality 

$$n^m \bigg(1+\frac{1}{111\log^2 (n^m)}\bigg) < n^m+m n^{m-1}$$
to get

\begin{equation} \label{upper}
\frac{n}{\log^2 n} < 111 m^3.
\end{equation}
We wish to choose the lowest value of $m$ for which the solution sets of (\ref{lower}) and (\ref{upper}) first coincide. It is not to hard to see that this equates to solving simultaneously the equations

\begin{equation*}
n = \exp\bigg( \frac{1000 \exp(19.807)}{m}\bigg)
\end{equation*}
and

\begin{equation*} 
\frac{n}{\log^2 n} = 111 m^3.
\end{equation*}
We do this by substituting the first equation directly into the second to get

$$\exp\bigg( \frac{1000 \exp(19.807)}{m}\bigg) = 111 (1000 \exp(19.807))^2 m$$
which can easily be solved with \textsc{Mathematica} to prove Theorem \ref{mpowers}. 

\clearpage

\bibliographystyle{plain}

\bibliography{biblio}

\begin{thebibliography}{10}

\bibitem{bakerharmanpintz}
R.~Baker, G.~Harman, and J.~Pintz.
\newblock The difference between consecutive primes, {II}.
\newblock {\em Proceedings of the London Mathematical Society}, 83(3):532--562,
  2001.

\bibitem{cheng}
Y.~Cheng.
\newblock Explicit estimate on primes between consecutive cubes.
\newblock {\em Rocky Mountain J. Math}, 40:117--153, 2010.

\bibitem{davenport1980multiplicative}
H.~Davenport.
\newblock {\em Multiplicative {N}umber {T}heory}, volume~74.
\newblock Springer, New York, 1980.

\bibitem{dusart}
P.~Dusart.
\newblock Estimates of some functions over primes without {RH}.
\newblock {\em arXiv preprint: 1002.0442}, 2010.

\bibitem{ford}
K.~Ford.
\newblock Zero-free regions for the {R}iemann zeta function.
\newblock {\em Number theory for the Millenium}, pages 25--56, 2002.

\bibitem{hoheisel}
G.~Hoheisel.
\newblock {\em Primzahlprobleme in der analysis}.
\newblock Walter de Gruyter, 1930.

\bibitem{ingham}
A.~E. Ingham.
\newblock On the difference between consecutive primes.
\newblock {\em The Quarterly Journal of Mathematics}, (1):255--266, 1937.

\bibitem{kadiri}
H.~Kadiri.
\newblock A zero density result for the {R}iemann zeta function.
\newblock {\em Acta Arithmetica}, 160:185--200, 2013.

\bibitem{liuwang}
M~Liu and T~Wang.
\newblock Distribution of zeros of {D}irichlet ${L}$-functions and an explicit
  formula for $\psi(t,\chi)$.
\newblock {\em Acta Arithmetica}, 102:261--293, 2002.

\bibitem{murtyproblems}
M.~R. Murty.
\newblock {\em Problems in Analytic Number Theory}, volume 206 of {\em Graduate
  Texts in Mathematics}.
\newblock Springer, New York, second edition, 2008.

\bibitem{ramare}
O.~Ramar\'{e}.
\newblock An explicit density estimate for {D}irichlet ${L}$-series.
\newblock {\em preprint}, 2013.

\bibitem{ramaresaouter}
O.~Ramar{\'e} and Y.~Saouter.
\newblock Short effective intervals containing primes.
\newblock {\em Journal of Number Theory}, 98(1):10--33, 2003.

\bibitem{skewes1955difference}
S.~Skewes.
\newblock On the difference $\pi(x)- \text{li}(x)$ ({II}).
\newblock {\em Proceedings of the London Mathematical Society}, 3(1):48--70,
  1955.

\bibitem{titchmarsh1986theory}
E.C. Titchmarsh.
\newblock {\em The {T}heory of the {R}iemann {Z}eta-function}.
\newblock Oxford University Press, second edition, 1986.

\bibitem{trudgianargument}
T.~S. {Trudgian}.
\newblock {An improved upper bound for the argument of the {R}iemann
  zeta-function on the critical line}.
\newblock {\em Math. Comp.}, 81(278):1053--1061, 2012.

\bibitem{trudgianpomerance}
T.~S. {Trudgian}.
\newblock {Updating the error term in the prime number theorem}.
\newblock {\em arXiv:1401.2689}, 2014.

\end{thebibliography}

\end{document}